\newcommand{\ZZo}{\ZZ_{\geq 0}}
\newcommand{\Gm}{{\mathbb G}_{\mathrm m}}
\newcommand{\Ga}{{\mathbb G}_{\mathrm a}}
\newcommand{\EE}{\mathbb{E}}
\renewcommand{\phi}{\varphi}
\newcommand{\uphi}{\underline{\phi}}
\newcommand{\upsi}{\underline{\psi}}
\newcommand{\sat}{\mathrm{sat}}
\begin{document}

\title{Matroids over one-dimensional groups}

\author{Guus P.~Bollen}
\address{Department of Mathematics and Computer Science,
Eindhoven University of Technology, P.O.~Box 513, 5600 MB,
Eindhoven, The Netherlands}
\email{g.p.bollen@tue.nl}

\author{Dustin Cartwright}
\address{Department of Mathematics,
University of Tennessee, 227 Ayres Hall, Knoxville, TN
37996-1320, USA}
\email{cartwright@utk.edu}

\author{Jan Draisma}
\address{Mathematisches Institut, Universit\"at Bern,
Sidlerstrasse 5,
3012 Bern, Switzerland; and Department of Mathematics and Computer
Science, Eindhoven University of Technology, P.O.~Box 513,
5600 MB Eindhoven, The Netherlands}
\email{jan.draisma@math.unibe.ch}

\begin{abstract}
We develop the theory of matroids over one-dimensional algebraic groups,
with special emphasis on positive characteristic. In particular, we
compute the Lindstr\"om valuations and Frobenius flocks of such matroids.
Building on work by Evans and Hrushovski, we show that the class of
algebraic matroids, paired with their Lindstr\"om valuations, is not
closed under duality of valuated matroids.
\end{abstract}

\maketitle

\section{Introduction}

Given  an algebraically closed field $K$ and a natural number $n$, any
irreducible subvariety $X \subseteq K^n$ gives rise to a matroid $M(X)$
on the ground set $[n]:=\{1,\ldots,n\}$ by declaring $I \subseteq
[n]$ to be independent if the coordinate projection $X \to K^I$ is
dominant, i.e., has dense image. The variety $X$ is an {\em algebraic
representation} 
of $M(X)$ and $M(X)$ is called an {\em algebraic matroid}.
For an equivalent, but more algebraic interpretation, one considers the
field $K(X)$ of rational functions on $X$, and calls $I$ independent
if the coordinates $x_i,\ i\in I$ map to elements of $K(X)$ that are
algebraically independent over $K$. For an introduction to algebraic
matroids we refer to \cite{Rosen19}.

The best-understood algebraic matroids are {\em linear matroids},
which are those coming from linear subvarieties $X \subseteq K^n$.
However, the class of algebraic matroids is larger than just
linear matroids. For example, in the first paper to study algebraic
representability of matroids, Ingleton gave an algebraic representation
of the non-Fano matroid, over any field, using a variety $X \subseteq
K^7$ parametrized by monomials~\cite[Ex.~15]{Ingleton71}, which we would
now call a toric variety, the Zariski closure of a connected algebraic
subgroup of $(K^*)^n$---a subtorus. More generally, such parametrizations
can be used to show that any linear matroid over $\QQ$ is algebraic over
any field. This construction is reviewed in Example~\ref{ex:Toric}.

In fact, linear spaces and subtori are examples of connected subgroups of
$G^n$, where $G$ is an arbitrary connected, one-dimensional algebraic
group over $K$.  The possibilities for $G$ are the additive group
$\Ga = (K, +)$, the multiplicative group $\Gm = (K^*, \cdot)$,
and any elliptic curve over $K$. The purpose of this paper is to
develop a unified theory for the algebraic matroids arising from such
subgroups. In this theory, the linear parametrization of a linear space
and the monomial parametrization of a subtorus are replaced by any
homomorphism of algebraic groups $\Psi:G^d \rightarrow G^n$ with image
$X$, a closed and connected subgroup of $G^n$. The homomorphism $\Psi$
is described by an $n \times d$ matrix with elements in the ring $\EE$
of endomorphisms of the group $G$. For instance, with $G=(K^*,\cdot)$ we
have $\EE \cong \ZZ$ via the isomorphism $\ZZ \ni a \mapsto (t \mapsto
t^a)$, and the rows of the matrix record the exponent vectors in the
monomial parametrization $\Psi$. If $K$ has characteristic zero, then
the endomorphism $\EE$ is commutative for any connected, one-dimensional
algebraic group $G$ over $K$. But if $K$ has positive characteristic,
then $\EE$ can be non-commutative; see \S\ref{ssec:Endo} for details.
These non-commutative rings give examples of non-linear matroids which are
algebraic over all fields of positive characteristic~\cite{Lindstrom86a}.

All of our results are formulated
uniformly over the three different types of algebraic groups, and only
in the proofs do we sometimes distinguish between them.

For all one-dimensional groups $G$,
$\EE$ is both a left and right Ore domain, so it is contained in a
division ring $Q$ (generated by $\EE$), and the algebraic matroid
represented by $X$ has a linear representation over $Q$. Here, we write
$M(X)$ for the matroid whose bases are all sets $I \subseteq [n]$ such
that the projection of $X$ to $G^I$ is all of $G^I$. This matroid is
equivalent to the algebraic matroid defined in the first paragraph
because, if we choose a non-constant rational function $h \colon G
\dto K$ and let $Y \subset K^n$ be the coordinatewise image
$h^n(X)$, then $M(X) = M(Y)$.

\begin{figure}
\begin{center}
\includegraphics{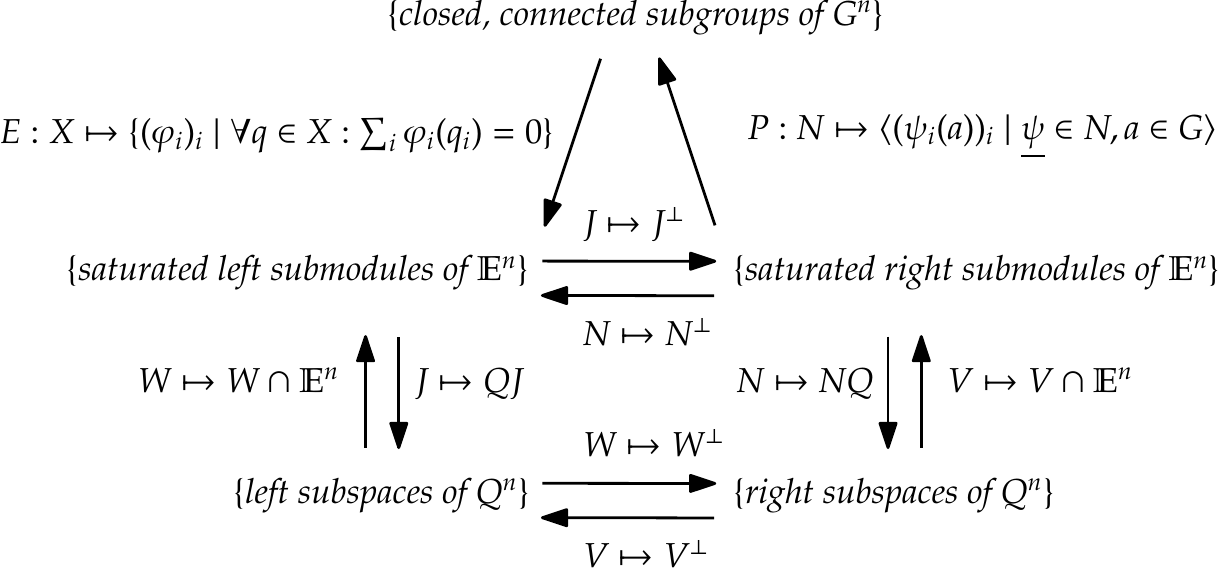}
\end{center}
\caption{The diagram of Theorem~\ref{thm:bijections}.}
\label{fig:bijections}
\end{figure}

\begin{thm} \label{thm:bijections}
Let $G$ be a connected, one-dimensional algebraic group
over an algebraically closed field $K$. Then the maps in
Figure~\ref{fig:bijections} are bijections and the diagram commutes.
Furthermore, for a closed, connected subgroup $X \subseteq G^n$ the
set $M(X)$ is a matroid and coincides with the linear matroid
on $[n]$ defined by the right vector space $P^{-1}(X)Q \subset Q^n$.
\end{thm}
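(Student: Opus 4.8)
The plan is to reduce the statement to linear algebra over the skew field $Q$ by means of an explicit parametrization of $X$. Using the construction behind Figure~\ref{fig:bijections}, write $X$ as the image of a homomorphism $\Psi\colon G^d\to G^n$, encoded by a matrix $A\in M_{n\times d}(\EE)$; we may take $A$ so that its columns span the right $Q$-vector space $W:=P^{-1}(X)Q\subseteq Q^n$, i.e.\ $W=AQ^d$. For $I\subseteq[n]$ the projection $X\to G^I$ is the image of $\pi_I\circ\Psi\colon G^d\to G^I$, whose matrix is the submatrix $A_I$ of $A$ on the rows indexed by $I$. So the whole statement follows from the

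\medskip\noindent\emph{Key Lemma.} For $B\in M_{m\times d}(\EE)$, the homomorphism $G^d\to G^m$ with matrix $B$ is surjective if and only if $\mathrm{rank}_Q(B)=m$.
\medskip

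For the ``if'' direction, assume $\mathrm{rank}_Q(B)=m$, so that $B$ has a right inverse over $Q$; since $\EE$ is right Ore we may clear denominators and obtain $C\in M_{d\times m}(\EE)$ and $\alpha\in\EE\setminus\{0\}$ with $BC=\mathrm{diag}(\alpha,\dots,\alpha)$. The composite $G^m\xrightarrow{C}G^d\xrightarrow{B}G^m$ is then the coordinatewise application of $\alpha$, which is surjective because every nonzero endomorphism of a connected one-dimensional group is surjective (it is a nonconstant morphism of the underlying curve onto itself; this is immediate for $\Ga$, $\Gm$, and elliptic curves). Hence $B\colon G^d\to G^m$ is surjective. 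For the ``only if'' direction, assume $\mathrm{rank}_Q(B)<m$; then the rows of $B$ are left $Q$-linearly dependent, and since $\EE$ is left Ore, clearing denominators on the left produces a nonzero $y_0\in\EE^m$ with $y_0B=0$ in $\EE^d$. Regard $y_0$ as a nonzero homomorphism $\phi\colon G^m\to G$; the identity $y_0B=0$ says exactly that $\phi$ vanishes on the image of $B$. But restricting $\phi=(\alpha_1,\dots,\alpha_m)$ to a coordinate copy $G\hookrightarrow G^m$ in a slot $j$ with $\alpha_j\neq 0$ yields the nonzero endomorphism $\alpha_j$, so $\phi$ is not identically zero on $G^m$, and therefore $B$ is not surjective.

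Granting the Key Lemma, $X\to G^I$ is onto $G^I$ precisely when $\mathrm{rank}_Q(A_I)=|I|$, i.e.\ when the rows of $A$ indexed by $I$ are left $Q$-linearly independent. A short computation identifies a left dependence $\sum_{i\in I}\lambda_iA_{i\ast}=0$ among those rows with a relation $\sum_{i\in I}\lambda_iw_i=0$ valid for every $w\in W$, so this property of $I$ holds if and only if $I$ is independent in the linear matroid on $[n]$ defined by the right subspace $W=P^{-1}(X)Q$. This collection of subsets of $[n]$ is closed under passing to subsets and coincides with the independent sets of the matroid $M(W)$; hence $M(X)$ is a matroid and equals $M(P^{-1}(X)Q)$, with rank---and common basis size---equal to $\dim_Q W$. (One further gets $\dim X=\dim_Q W$: projecting onto a maximal independent $I$ gives a surjection $X\to G^I$, while for $j\notin I$ a relation $\alpha_jA_{j\ast}=\sum_{i\in I}\mu_{ji}A_{i\ast}$ over $\EE$ shows that $\alpha_jx_j$ is determined on $X$ and $\alpha_j$ has finite kernel.)

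The crux is the ``if'' direction of the Key Lemma---turning full row rank over $Q$ into honest surjectivity of a group homomorphism. That is exactly where both hypotheses enter: the Ore property of $\EE$ lets us pull the rational right inverse back to an integral matrix, and the one-dimensionality of $G$ makes the resulting diagonal endomorphism surjective. Everything else is routine, beyond the care needed to keep track of left versus right modules over the possibly noncommutative ring $\EE$ and its skew field of fractions $Q$.
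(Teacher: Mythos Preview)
Your Key Lemma is correct and its proof is clean; deducing that $I$ is independent in $M(X)$ iff the rows $A_I$ are left $Q$-independent is valid. This parallels the paper's route, which proves the slightly stronger Lemma~\ref{lm:P} ($\dim P(N)=\rk N$ for every right submodule $N\subseteq\EE^n$) and then specializes to the projections $X\to G^I$.

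The gap is that you address only the ``Furthermore'' clause and silently assume the first sentence of the theorem. When you ``write $X$ as the image of a homomorphism $\Psi\colon G^d\to G^n$'' with columns spanning $P^{-1}(X)Q$, you are using (i) that every closed connected subgroup of $G^n$ is the image of some $G^d\to G^n$, and (ii) that $P$ is a bijection, so that $P^{-1}(X)$ is defined and any parametrizing matrix has $Q$-column span equal to $P^{-1}(X)Q$. Neither of these is routine. Point (i) is Lemma~\ref{lm:Proj}, which for $\Ga$ in positive characteristic is quoted from~\cite{Conrad10} and for elliptic curves uses duality of isogenies; point (ii) is Lemma~\ref{lm:commutes}, whose proof in turn relies on Lemma~\ref{lm:Proj} together with a rank comparison between $E(X)$ and $N^\perp$. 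Your Key Lemma can replace part of Lemma~\ref{lm:P} inside that argument, but it does not by itself produce a parametrization of an arbitrary $X$, nor does it show that $E$ and $P$ are inverse to each other via $\perp$. As written, the symbol $P^{-1}(X)$ in your proof has no meaning until the bijectivity is established, so the first half of the theorem remains unproved.
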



The map $P$ in Figure \ref{fig:bijections} sends $N$ to the subgroup of
$G^n$ generated by the elements of the form $(\psi_i(a))_i$ with $a \in
G$ and $\upsi$ running through $N$. We think of the right modules $N$
and $V$ as giving {\em parametrizations} of $X$ and of the left modules
$J$ and~$W$  as giving {\em equations} for $X$, whence the notation $E$
and $P$. The ring $\EE$ is left and right Noetherian, so a right module
$N \subseteq \EE^n$ has a finite generating set. Use these vectors as
the columns of an $n \times d$-matrix $\Psi$. Then $\Psi$ gives a
natural group homomorphism $G^d
\to G^n$; $P(N)$ is the image of this homomorphism. A different choice
of generators yields a different matrix $\Psi$ with the same column
space and a different homomorphism $G^{d'} \to G^n$ with the same image.

We note that our use of the column space of an $n \times m$
matrix~$\Psi$, in order to define a matroid on the ground set $[n]$, differs
from the conventional use of row spaces in matroid theory, which define
matroids on the set of columns. However, in our construction of $\Psi$,
it is the rows which are labeled by $[n]$, because $\Psi$ is a matrix
defining a group homomorphism to $G^n$. Since $\EE$ is possibly
non-commutative, the column space of $\Psi$ is not equivalent to the row
space of its transpose, and so we use the column space to define our
matroids consistently.


Because of Theorem~\ref{thm:bijections}, we call the matroids isomorphic to $M(X)$
for some closed, connected $X \subseteq G^n$ {\em $\EE$-linear}.
An $\EE$-linear matroid $M$ admits an algebraic
representation in the sense of the first paragraph of this paper:
choosing a non-constant rational function $h:G \dto K$ defined near $0
\in G$ we obtain a rational map $h^n: G^n \dto K^n$, and the variety
$Y:=\overline{h^n(X)} \subseteq K^n$ has $M(Y)=M$.

\begin{thm} \label{thm:duality}
The class of $\EE$-linear matroids is closed under contraction, deletion,
and duality.
\end{thm}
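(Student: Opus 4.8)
The plan is to reduce the statement to linear algebra over the division ring $Q$. By Theorem~\ref{thm:bijections} together with the bijections of Figure~\ref{fig:bijections}, a matroid is $\EE$-linear if and only if it is isomorphic to the linear matroid of some right $Q$-subspace $V \subseteq Q^n$. Indeed, the bijection $X \leftrightarrow V$ of Figure~\ref{fig:bijections}, with $V = P^{-1}(X)Q$, shows at once that every $\EE$-linear matroid arises from such a $V$ and, conversely, that every right $Q$-subspace of $Q^n$ occurs: given $V$, choose a finite generating set, clear right denominators (possible since $\EE$ is a right Ore domain) to obtain a right $\EE$-submodule $N \subseteq \EE^n$ with $NQ = V$, and put $X := P(N)$, which is a closed connected subgroup. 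Hence it is enough to show that the class of matroids linearly representable over $Q$ is closed under deletion, contraction, and duality.

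Closure under deletion and contraction is the usual argument, valid over an arbitrary division ring. If $V \subseteq Q^n$ represents $M$ and $i \in [n]$ is not a loop of $M$, then the image of $V$ under the coordinate projection $Q^n \to Q^{[n] \setminus i}$ represents $M \setminus i$, while the image of $V \cap \{ v \in Q^n : v_i = 0 \}$ under the same projection represents $M / i$; loops and coloops are handled directly. The resulting subspaces are again right $Q$-subspaces, so by the previous paragraph the matroids $M \setminus i$ and $M / i$ are $\EE$-linear.

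Closure under duality is the crux. Let $V \subseteq Q^n$ be a right $Q$-subspace representing $M$. By the standard duality for matroids linearly represented over a division ring, $M^*$ is represented by the annihilator $V^\circ := \{ w \in Q^n : \sum_{i=1}^n w_i v_i = 0 \text{ for all } v \in V \}$, which is naturally a \emph{left} $Q$-subspace of $Q^n$. To bring this back into the world of right subspaces I would fix an anti-automorphism $\tau$ of $Q$. Such a $\tau$ exists in each of the three cases: for $G = \Gm$ the ring $\EE$ is commutative; for $G$ an elliptic curve one extends the standard involution of the relevant imaginary quadratic or quaternion order; and for $G = \Ga$ with $\operatorname{char} K = p$ one takes the map fixing $K$ pointwise and sending the Frobenius $F$ to $F^{-1}$, which is an anti-automorphism of $Q$ even though it is not defined on $\EE$ itself (compare \S\ref{ssec:Endo}). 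Applying $\tau$ coordinatewise carries the left subspace $V^\circ$ to a right $Q$-subspace $\tau(V^\circ) \subseteq Q^n$, and since $\tau$ is a bijection taking each $Q$-linear dependence among a subtuple of the coordinates to a $Q$-linear dependence on the opposite side, the right subspace $\tau(V^\circ)$ represents the same matroid as the left subspace $V^\circ$, namely $M^*$. Thus $M^*$ is $\EE$-linear.

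The main obstacle I expect is this duality step: the careful tracking of the left/right conventions and, in the non-commutative case $G = \Ga$ with $\operatorname{char} K = p$, the existence of the anti-automorphism of $Q$. Equivalently, one needs $Q \cong Q^{\mathrm{op}}$, which fails at the level of endomorphism rings — there $\EE$ and $\EE^{\mathrm{op}}$ are non-isomorphic twisted polynomial rings over $K$, differing by replacing the Frobenius with its inverse — but becomes true after inverting $F$. One may also recast the argument geometrically: the left $\EE$-module of equations of $X$ is, through the same anti-automorphism, turned into the right module parametrizing a subgroup $X^*$, and then $M(X^*) = M(X)^*$.
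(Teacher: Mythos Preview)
Your proposal is correct and follows essentially the same route as the paper: reduce to linear algebra over the division ring $Q$, handle deletion and contraction by projecting/intersecting, and for duality pass to the (left) orthogonal complement and then transport it back to a right subspace via an anti-automorphism $\tau$ of $Q$, constructed case by case exactly as you describe (identity in the commutative cases, dual isogeny for supersingular elliptic curves, $F\mapsto F^{-1}$ for $\Ga$ in positive characteristic). The paper likewise stresses that in the $\Ga$ case $\tau$ lives on $Q$ but not on $\EE$, matching your remark that $\EE\not\cong\EE^{\mathrm{op}}$ while $Q\cong Q^{\mathrm{op}}$.
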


\begin{ex} \label{ex:KF}
Let $G=\Ga$ be the additive group over $K = \overline{\FF_2}$.
Then $\EE$ is isomorphic to the skew polynomial ring $K[F]$
in which multiplication is governed by the rule $Fa=a^2 F$.
The (right) column space $N$ of the matrix
\[\Psi = \left(
    \begin{array}{cc}
      1 & 0 \\
      0 & 1 \\
      1 & 1 \\
      1 & F \\
    \end{array}
  \right),
\]
is saturated in $K[F]^4$, and $P(N)=\{(a,b,a+b,a+b^2) \mid (a,b) \in
\Ga^2 \}.$ The matroid $M(P(N))$ is the uniform matroid $U_{2,4}$.
\hfill $\clubsuit$
\end{ex}

\begin{ex} \label{ex:Toric}
Let $G=\Gm$ be the multiplicative group over $K$.  Then $\EE \cong
\ZZ$ via the map $\ZZ \to \EE, a \mapsto (t \mapsto t^a)$. Any matrix
$\Psi \in \ZZ^{n \times d}$ of full rank $d$ gives rise to an algebraic
group homomorphism $\Psi: \Gm^d \to \Gm^n, t=(t_1,\ldots,t_d) \mapsto
(t_1^{\psi_{i1}} \cdots t_d^{\psi_{id}})_{i=1}^n$ whose image $X$
is a $d$-dimensional subtorus of the $n$-dimensional torus $\Gm^n$.
The matroid $M(X)$ is equal to the linear matroid over $\QQ$ in which
$I \subseteq [n]$ is an independent set if and only if the corresponding
rows of $\Psi$ are linearly independent.

This classical argument shows
that matroids linear over $\QQ$ are algebraic over any field. The group
homomorphism $\Psi: \Gm^d \to \Gm^n$ is a closed embedding if and only
if the $\ZZ$-column space of $\Psi$ is a saturated submodule of $\ZZ^n$.
Our framework is a common generalization of linear matroids
and these toric matroids.
\hfill $\clubsuit$
\end{ex}

\subsection*{Positive characteristic}
In characteristic zero, by Ingleton's theorem \cite{Ingleton71},
the class of matroids that admit an $\EE$-linear representation
is the same as the class of matroids that admit a $K$-linear
representation. Therefore, we next specialize to characteristic $p>0$,
and we study $\EE$-linear matroids from the perspective of the theory
developed in \cite{Bollen17,Cartwright18}.  There, for any irreducible
variety $Y \subseteq K^n$, a canonical matroid valuation on $M(Y)$ is
constructed, called the {\em Lindstr\"om valuation}. Furthermore, in
\cite{Bollen17}, a so-called {\em Frobenius flock} is associated to the
pair consisting of $Y$ and a sufficiently general point of
$Y$. Here, in \S~\ref{ssec:Flock},
we will define the Frobenius flock of a closed, connected subgroup $X
\subseteq G^n$, use it to define the Lindstr\"om valuation on $M(X)$,
and relate these definitions to the constructions of \cite{Bollen17} via
a rational function $G \dto K$ as above. The goal is, then, to express
these invariants of $X$ in the data of Theorem~\ref{thm:bijections}.

To this end, we proceed as follows. In \S~\ref{ssec:Valuation} we
construct a canonical valuation $v\colon\EE \to \ZZo \cup \{\infty\}$
such that $v(\alpha)>0$ if and only if $\alpha \colon G \rightarrow G$ is an inseparable
morphism. A routine check shows that $v$ extends to $Q$.
Then, in \S~\ref{ssec:VSpaceFlocks}, we describe a general construction
of a $K$-linear flock from a right vector subspace $V \subset Q^n$,
and of a compatible valuation on the linear matroid on $[n]$ determined
by $V$. These are related to the Frobenius flock and the Lindstr\"om
valuation of $X$ and to \cite{Bollen17} as follows.

\begin{thm} \label{thm:Flock}
Let $X$ be a closed, connected subgroup of $G^n$ and let
$N=P^{-1}(X)$ be the saturated right submodule of $\EE^n$ representing
$X$. Then the Frobenius flock of $X$ equals the linear flock of
$NQ$, and the Lindstr\"om valuation of $X$ equals the matroid valuation
corresponding to $NQ$. Furthermore, let $h:G \dto K$ be a
rational function defined near $0$ and set
$Y:=\overline{h^n(X)}$. Under mild conditions on $h$, the
point $h^n(0) \in Y$ satisfies the genericity condition (*) from
\cite{Bollen17}, and $d_0 h^n$ restricts to a linear
bijection between the Frobenius flock of $X$ and the
Frobenius flock of $(Y,0)$.
\end{thm}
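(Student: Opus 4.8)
The plan is to reduce all three assertions to a single local picture near the identity of $G^n$, expressed directly in terms of a matrix $\Psi \in \EE^{n \times d}$ whose columns generate $N$, so that $X = P(N)$ is the image of the homomorphism $\Psi \colon G^d \to G^n$. I would first make the Frobenius flock of $X$, as defined in \S\ref{ssec:Flock}, explicit in terms of $\Psi$. By construction it is assembled from the tangent spaces at $0$ of the Frobenius twists of $X$ after coordinatewise rescaling; since $X$ is the image of the group homomorphism $\Psi$, each such twist is again the image of a homomorphism obtained from $\Psi$ by twisting the entries and rescaling rows. The decisive point is the meaning of the valuation $v$ from \S\ref{ssec:Valuation}: for a single $\alpha \in \EE$, $v(\alpha)$ is exactly the number of rescalings one must perform before the differential of $\alpha$ at $0$ becomes visible, in the sense that dividing by a uniformizer to the power $v(\alpha)$ and reducing makes $\alpha$ act as a nonzero scalar on tangent spaces. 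Carrying this out entry by entry, one finds that for each $\alpha \in \ZZ^n$ the $\alpha$-part of the Frobenius flock of $X$ is the $K$-span of the $v$-initial forms of the columns of $\Psi$, taken after the coordinatewise rescaling prescribed by $\alpha$.

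Next I would match this with the linear flock of $NQ$ from \S\ref{ssec:VSpaceFlocks}: there one takes $V = NQ \subseteq Q^n$, rescales the $i$-th coordinate of $Q^n$ by a uniformizer to the power $\alpha_i$, intersects with the valuation ring of $Q$, and reduces modulo its maximal ideal into $K^n$ (the residue field of $Q$ being a subfield of $K$ in all three cases). Because $N$ is saturated, so that $N = NQ \cap \EE^n$, this reduction is exactly the $K$-span of the reductions of the rescaled columns of $\Psi$, matching the description above; this proves the first assertion. The second assertion is then formal: by \S\ref{ssec:VSpaceFlocks} the matroid valuation attached to $NQ$ is the valuation of the linear flock of $NQ$, while the Lindstr\"om valuation of $X$ is, by definition in \S\ref{ssec:Flock} and the theory of \cite{Bollen17,Cartwright18}, the valuation of the Frobenius flock of $X$; since the two flocks agree, so do the valuations. (One may also see both directly as the $v$-adic tropicalization of the Pl\"ucker vector of $NQ$.)

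For the comparison with an affine model I would take the mild hypothesis on $h$ to be that it is defined and separable at $0$, i.e.\ $d_0 h \neq 0$, so that $h \colon G \dto K$ is \'etale at $0$ and hence $h^n \colon G^n \dto K^n$ is \'etale at $0$. Since $h$ acts in the same way in every coordinate, $d_0 h^n$ is a scalar multiple of the identity on $K^n$; it therefore commutes with the coordinatewise rescalings used to build the flocks, and, an \'etale morphism identifying tangent spaces of subvarieties and having \'etale Frobenius twists, $d_0 h^n$ carries each rescaled twist tangent space of $X$ isomorphically onto the corresponding one of $Y = \overline{h^n(X)}$ at $h^n(0)$ --- this is the asserted bijection of flocks, granted that the flock of $(Y, h^n(0))$ is defined. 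For the latter, i.e.\ condition (*) of \cite{Bollen17} at $h^n(0)$, I would use the homogeneity of $X$: translating by elements of $X$ acts transitively on $X$ and intertwines Frobenius up to a $p$-power-twisted translation, so the tangent-space dimension data of the twists of $X$ is constant over $X$ and equals its generic value already at $0$; pushing this forward along the \'etale map $h^n$ near $0$ yields condition (*) at $h^n(0)$.

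The main obstacle I anticipate is the bookkeeping in the middle step: aligning the Frobenius rescaling that defines the flock of a variety in \cite{Bollen17} with the $v$-adic rescaling of $Q^n$, uniformly across the cases where $\EE$ is the twisted polynomial ring $K\{F\}$, the ring $\ZZ$, or an order in an imaginary quadratic field or a quaternion algebra --- the last case requiring a careful description of the residue data of the noncommutative valued division ring $Q$. A secondary point needing care is the verification of condition (*) at the canonical but non-generic point $0 \in X$.
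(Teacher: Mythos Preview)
Your strategy for the first two assertions is essentially the paper's: the key is the derivative homomorphism $\ell\colon R \to K$ and the identity $T_0 X = \langle \ell(NQ \cap R^n)\,v\rangle_K$ (Lemma~\ref{lm:LieAlgebra}), which, applied to $\pi^{-\alpha}X$, immediately matches the Frobenius flock with the linear flock of $NQ$. One imprecision: you write ``$N$ saturated, so $N = NQ \cap \EE^n$'' and infer that $V_\alpha$ is spanned by the reductions of the rescaled columns of $\Psi$. But the relevant lattice is $NQ \cap R^n$ with $R$ the valuation ring of $Q$, not $\EE^n$; and after rescaling by $\pi^{-\alpha}$ the columns of $\Psi$ need not lie in $R^n$ at all, so they do not directly reduce. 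The paper handles this by working with arbitrary elements $\upsi\beta^{-1} \in NQ \cap R^n$ and tracking their derivatives via the factorization $\beta = \beta' F^e$.

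For the third assertion there is a genuine gap. The Frobenius flock of $X \subseteq G^n$ is built from a uniformizer $\pi \in \EE$ (Definition in \S\ref{ssec:Flock}), whereas the Frobenius flock of $Y \subseteq K^n$ from \cite{Bollen17} is built from the Frobenius $F$ on $K$. When $G \neq \Ga$ these are different operations: one has $\pi = \psi \circ F$ with $\psi\colon G^{(p)} \to G$ separable, and the map carrying $T_0(\pi^{-\alpha}X)$ to $T_0 Y^{(p^\alpha)}$ is $d_0 h^{(p^\alpha)} \circ (d_0 \psi^\alpha)^{-1}$, which \emph{a priori} depends on $\alpha$. The paper's ``mild condition'' (**) is precisely the normalization $d_0 h \circ d_0 \psi = d_0 h^{(p)}$ (achievable by scaling $h$), which forces this $\alpha$-dependent map to equal $d_0 h^n$ for every $\alpha$. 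Your assertion that ``$d_0 h^n$ commutes with the coordinatewise rescalings used to build the flocks'' hides exactly this issue: the rescalings on the $G^n$ side and on the $K^n$ side are not the same, and only the normalization (**) makes the single map $d_0 h^n$ intertwine them. Assuming merely $d_0 h \neq 0$ is not enough.
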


The mild conditions in this theorem are specified in Equation (**)
in \S~\ref{ssec:ProofFlock}.

By Theorem~\ref{thm:duality}, the algebraic matroids arising
from groups are closed under duality. However, the method used to prove
this duality does not dualize the Lindstr\"om valuation in the sense
of \cite[Proposition 1.4]{Dress92}. On the contrary, using results from
\cite{Evans91}, we prove:

\begin{thm}\label{thm:NonDual}
The class of algebraic matroids equipped with their Lindstr\"om
valuations is not closed under duality of valuated matroids. In
particular, if $K$ has positive characteristic, then there exists a
closed, connected subgroup $X \subset \Ga^n$, with algebraic matroid $M$
and Lindstr\"om valuation $w$, such that the dual valuated matroid
$(M^*, w^*)$ is not the algebraic matroid and Lindstr\"om valuation of
any variety $Y \subset K^{n}$.
\end{thm}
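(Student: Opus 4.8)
The plan is to exhibit a single connected subgroup $X\subset\Ga^n$ over $K=\overline{\FF_p}$ whose algebraic matroid $M$ and Lindstr\"om valuation $w$ are completely explicit, and then to prove that the valuated-matroid dual $(M^*,w^*)$ cannot be the Lindstr\"om valuation of any variety $Y\subseteq K^n$. Note that the underlying matroid $M^*$ \emph{is} algebraic, by Theorem~\ref{thm:duality}; so the obstruction has to live in the valuation and not in the matroid, and this is precisely where the results of \cite{Evans91} come in. The argument also explains the remark above that $\EE$-linear duality fails to dualize the Lindstr\"om valuation: the $\EE$-linear representation $N^\perp$ of $M^*$ produced by Theorem~\ref{thm:duality} simply has the wrong Lindstr\"om valuation, and the content of the theorem is that \emph{no} representation has the right one.

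First I would pin down the example. With $\EE\cong K[F]$ and $Fa=a^pF$, choose a saturated right submodule $N\subseteq\EE^n$ for a small explicit $n$, in the spirit of Example~\ref{ex:KF} but with enough entries involving $F$ that $w$ is nontrivial, and arranged so that the orthogonal module $N^\perp$ --- equivalently the dual matroid $M^*=M(N^\perp)$ --- has a transparent combinatorial structure. Set $X=P(N)$ and $M=M(X)$. By Theorem~\ref{thm:Flock}, $w$ is the matroid valuation attached to the right $Q$-space $NQ$ for the valuation $v$ on $Q$ normalized by $v(F)=1$; concretely, $w(B)$ is the $v$-value of the Dieudonn\'e determinant of the $B$-rows of a matrix whose columns generate $N$, and $w^*(B)=w([n]\setminus B)$ for bases $B$ of $M^*$. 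I would then check that $(M,w)$ is genuinely nontrivial (hence so is $(M^*,w^*)$, duality being an involution), and that $w^*$ differs from the Lindstr\"om valuation of the $\EE$-linear module $N^\perp$.

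The heart of the proof is to use \cite{Evans91} to rule out all other representations. From the Evans--Hrushovski rigidity results --- that a rank-$3$ sub-configuration of an algebraic matroid which behaves like a projective plane must be coordinatized by a subfield, so that the associated Frobenius flock is forced --- one extracts a constraint of the form: there is a rank-$3$ minor $D$ of $M^*$ such that, for every variety $Y$ with $D$ a minor of $M(Y)$, the Lindstr\"om valuation of $Y$ is trivial (or confined to a small explicit family) along $D$. Since the Lindstr\"om valuation restricts compatibly to minors --- the Lindstr\"om valuation of a suitable minor of $Y$ (obtained by projection and passage to a general fibre) being the corresponding minor of the Lindstr\"om valuation of $Y$, by the constructions of \cite{Bollen17,Cartwright18} --- any $Y$ realizing $(M^*,w^*)$ would produce an algebraic representation of $D$ whose Lindstr\"om valuation is $w^*|_D$. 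Choosing $N$ so that $w^*|_D$ falls outside the allowed family then gives the contradiction, and $(M^*,w^*)$ is the required witness; taking $G=\Ga$ and $K$ of characteristic $p$ gives the stated refinement.

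The main obstacle is the pair of design constraints in the last two steps: $N$ must be small enough that $M$, $M^*$, $w$ and $w^*$ can all be computed by hand, yet $M^*$ must contain a rank-$3$ minor to which the Evans--Hrushovski rigidity applies, and the complemented valuation $w^*$ must then be verified to violate the resulting restriction on that minor. Finding such an $N$ --- and checking that the minor operations interact with the Lindstr\"om valuation as claimed --- is the delicate part; the remaining computations, namely the Dieudonn\'e determinants over the skew field $Q$ and the complementation defining $w^*$, are routine.
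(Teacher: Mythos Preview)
Your proposal is a plan, not a proof, and the plan as written does not yet close. You correctly identify the three ingredients --- an explicit $\Ga$-subgroup, the Lindstr\"om valuation via Dieudonn\'e determinants (Theorem~\ref{thm:Flock} and Proposition~\ref{prop:Determinant}), and rigidity input from \cite{Evans91} --- but you leave the entire load-bearing step, the construction of $N$ and the verification that $w^*$ violates the Evans--Hrushovski constraint, as an unspecified ``delicate part.'' That is precisely the content of the theorem.

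Two more concrete issues. First, the rigidity you invoke is not the one that is actually used. You speak of a rank-$3$ projective-plane-like minor of $M^*$ forcing the valuation to be trivial there. The paper instead uses the \emph{universality} construction of \cite[Lem.~3.4.1]{Evans91}: one builds $M^*$ (not $M$!) directly so that every algebraic realization of $M^*$ is equivalent to an $\EE$-linear one in which certain coordinates $u,y_0,\ldots,y_{p^3-2}$ satisfy the relations $y_{i+1}=uy_i$, $y_0=uy_{p^3-2}$, $y_p=y_0u$. These force $u$ to be a primitive $(p^3-1)$-st root of unity, which rules out $\Gm$ and elliptic curves (their endomorphism algebras have $\QQ$-degree at most $2$), so $G=\Ga$ and $\EE=K[F]$. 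A leading-term homomorphism $\mu\colon K[F]\setminus\{0\}\to K^*\rtimes\ZZ$ then converts $y_0u=u^py_0$ into the congruence $v(y_0)\equiv 1\pmod 3$. Second, the invariant that detects the obstruction lives on a \emph{rank-$2$} restriction, namely a $U_{2,4}$ on which $y_0$ appears as a cross-ratio; the specific combination $w^*(\{1,4\})+w^*(\{2,3\})-w^*(\{1,3\})-w^*(\{2,4\})$ equals $v(y_0)$ for every algebraic realization and is invariant under adding trivial valuations (this is where Proposition~\ref{prop:Equivalence} enters, not a general ``valuation restricts to minors'' claim). One then exhibits a single $K[F]$-realization of $M^*$ with $v(y_0)=1$, dualizes via Theorem~\ref{thm:duality} to get $M$ and $w$, and checks that the same cross-ratio for $w^*$ equals $-1\not\equiv 1\pmod 3$.

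So the direction of construction is reversed from what you propose (design $M^*$ first to make the rigidity bite, then take $M$ as its dual), the relevant sub-configuration is $U_{2,4}$ rather than a rank-$3$ plane, and the constraint is a congruence extracted via the semidirect-product map $\mu$, not mere triviality of the valuation. Your outline does not contain any of these mechanisms, and without them the argument does not go through.
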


To be clear, the dual matroid $M^*$ from Theorem~\ref{thm:NonDual} {\em
is} algebraic---indeed, this follows from Theorem~\ref{thm:duality}---but
the Lindstr\"om valuation of any realization of $M^*$ is distinct from
$w^*$.

\subsection*{Relation to existing literature}

As noted, the use of endomorphisms of $\Gm$ appears as an example
in \cite{Ingleton71}.  The systematic use of endomorphisms of $\Ga$
(dub\-bed {\em $p$-polynomials}) in matroid theory first appeared
in \cite{Lindstrom88}, though particular cases were studied in
\cite{Lindstrom86a} and \cite{Lindstrom86b}, which used $p$-polynomials to
show that the non-Pappus matroid is algebraic over any field of positive
characteristic. The first uniform treatment of endomorphism rings of
all possible one-dimensional algebraic groups is in~\cite{Evans91},
which used model theory to show that each algebraic representation
of certain matroids would have to be algebraically equivalent to an
$\EE$-linear representation for some one-dimensional algebraic group $G$
with endomorphism ring $\EE$.

There is no logical dependence between our work and the recent literature
on matroids over tracts and hyperfields \cite{Baker19}, although algebraic
realizations of matroids in positive characteristic do yield matroids
over natural {\em non-commutative} hyperfields \cite{Pendavingh18}.

Modules over rings with a valuation feature both here and in
\cite{Fink19}, but there is no immediate connection between our results
and theirs. Indeed, there, the input data consists of a finitely presented
module over $R$ for each subset of a ground set, subject to certain axioms
that enhance properties of the rank function of an ordinary matroid;
while here, a single submodule of $\EE^n$ represents a matroid over an
algebraic group with endomorphism ring $\EE$.

The rows of our matrix $\Psi$ representing a parametrization $G^d \to X
\subseteq G^n$ give homomorphisms $G^d \to G$, and their kernels define
a collection of codimension-one subgroups of $G^d$.  There is currently
much research activity on such (central) {\em Abelian arrangements},
especially in the case where the ground field is $\CC$ and the
topology of the complement is studied via combinatorial techniques
\cite{DeConcini05,Adderio13,DAli18,Delucchi19}. While the questions here
are somewhat orthogonal to our focus, the progression from hyperplane
arrangements to toric arrangements to elliptic arrangements mirrors the
observation that toric varieties provide non-linear algebraic matroids,
followed by the generalization to elliptic curves. However, our interest
is primarily in characteristic-$p$ phenomena, and especially the
non-commutative endomorphism rings that arise there.

\subsection*{Organization}

The remainder is organized as follows. In Section~\ref{sec:Arbitrary}
we recall basic facts about one-dimensional algebraic groups $G$,
their endomorphisms rings $\EE$, and submodules of $\EE^n$; and
we prove Theorems~\ref{thm:bijections} and~\ref{thm:duality}. In
Section~\ref{sec:CharP}, we zoom in on characteristic $p$ and establish
Theorem~\ref{thm:Flock}. Finally, in Section~\ref{sec:Examples} we give
several examples of $\EE$-linear matroids with $G$ equal to $\Gm$, $\Ga$,
or an elliptic curve and prove Theorem~\ref{thm:NonDual}.

\subsection*{Acknowledgments}
JD was partially supported by the NWO Vici grant entitled {\em
Stabilisation in Algebra and Geometry}. GPB was supported by the Stichting
Computer Algebra Nederland. DC was partially supported by NSA Young
Investigator grant H98230-16-1-0019. We thank Aurel Page for his
MathOverflow answer at
\texttt{https://mathoverflow.net/questions/309320}, which helped with
the last case of Lemma~\ref{lm:ValOne}. Work on this paper started at
the Institut Mittag-Leffler during their program ``Tropical Geometry,
Amoebas, and Polyhedra''; we thank the institute and its staff for
fantastic working conditions. 

%
%
%
%

\section{Arbitrary characteristic} \label{sec:Arbitrary}

\subsection{One-dimensional algebraic groups}
Let $G$ be a one-dimensional algebraic group over the algebraically
closed field $K$. Thus, $G$ is either the multiplicative group
$\Gm=(K^*,\cdot)$, the additive group $\Ga=(K,+)$, or an elliptic
curve (for a classification of the affine one-dimensional groups see
\cite[Theorem III.10.9]{Borel91}; Hurwitz' automorphism theorem rules
out that the genus of $G$ is strictly greater than $1$). In particular,
$G$ is Abelian. We write the operation in $G$ additively in the uniform
treatment of these cases.

\subsection{The endomorphism ring} \label{ssec:Endo}
The set $\EE:=\End(G)$ of endomorphisms of $G$ as an algebraic group is a
ring with operations $\cdot=\circ$ (composition) as multiplication and
$(\phi+\psi)(g):=\phi(g)+\psi(g)$ as addition; for $\phi+\psi$ to be an
endomorphism, the fact that $G$
is Abelian is crucial.

If $G=\Gm$, then $\EE \cong \ZZ$ via the map $\ZZ \to \EE,\ a \mapsto
(t \mapsto t^a)$. If $G=\Ga$ and $\cha K=0$, then $\EE \cong K$ via
the map $K \to \EE, c \mapsto (d \mapsto cd)$. For $G=\Ga$ in positive
characteristic, $\EE$ is the skew polynomial ring $K[F]$ whose elements
are polynomials in $F$ and multiplication is governed by the rule
$Fa=a^p F$ for all $a \in K$, because $a \in K$ acts by scaling $\Ga$
and $F$ acts as the Frobenius operator.  If $G$ is an elliptic curve,
then $\EE$ is either the ring of integers, an order in an imaginary
quadratic number field, or an order in a quaternion algebra (only in
positive characteristic) \cite[Theorem V.3.1]{Silverman09}. For any
one-dimensional group $G$, its endomorphism ring $\EE$ embeds into a
division ring $Q$ (generated by $\EE$): $Q$ is either a number field,
or a quaternion algebra, or $K$ itself, or the division ring $K(F)$
of the ring of $p$-polynomials constructed in \cite{Lindstrom88}.

\begin{ex} \label{ex:curve}
Consider the elliptic curve $G$ over $\overline{\FF}_2$ in the
projective plane whose equation in the affine $(x,y)$-chart is $y^2 +
y =x^3+x+1$. The Abelian group structure on $G$ is uniquely determined by
requiring that the point $O:=(0:1:0)$ is the neutral element and requiring
that the intersection points with $G$ of any line in the projective
plane add up to $O$. The ring $\EE$ is isomorphic to the ring of {\em
Hurwitz quaternions}: those quaternions $a+bi+cj+dk$ with either $a,b,c,d \in \ZZ$
or else $a,b,c,d \in 1/2 + \ZZ$; see \cite[Chapter 3, \S 6 and Appendix
IV]{Husemoeller04}, where explicit endomorphisms are described. Having
a non-commutative endomorphism ring, the curve is supersingular.
\end{ex}

\subsection{Submodules of $\EE^n$}
Since $\EE$ is, in general, a non-commutative ring, we distinguish between
left submodules $J$ and right submodules $N$ of $\EE^n$. We define $QJ$
(respectively, $NQ$) to be the left (respectively, right) $Q$-subspace
of $Q^n$ generated by $J$. Linear algebra over a division ring $Q$ works
very similarly to linear algebra over fields---all modules are free and there
are well-defined notions of basis and dimension---and this is why we use
``$Q$-vector space'' rather than ``$Q$-module''.  However, one needs
to distinguish between left and right vector spaces, and for instance
the dual of a left $Q$-vector space $V$ is naturally a right vector
space.

The dimension of $QJ$ (respectively, $NQ$) is called the {\em rank}
$\rk J$ (respectively, $\rk N$) of $J$ (respectively, $N$). We call
$J^\sat:=QJ \cap \EE^n$ and $N^\sat:=NQ\cap \EE^n$ the {\em saturations}
of $J$ and $N$, and we call $J$ and $N$ {\em saturated} if they are
equal to their saturations. These are straightforward generalizations
of the familiar notion of saturated lattice in $\ZZ^n$. In
particular, $J$ is saturated if and only if the
following holds: if $\alpha \uphi \in J$ for some $\uphi \in \EE^n$
and $\alpha \in \EE \setminus \{0\}$, then $\uphi \in J$; and similarly
for $N$. Furthermore, $J \mapsto QJ$ and $N \mapsto NQ$ are bijections
between saturated submodules of $\EE^n$ and $Q$-subspaces in $Q^n$. In
particular, if $J \subseteq J'$ are both saturated and $\rk J=\rk J'$,
then $J=J'$; and similarly for $N$.

\subsection{Orthogonal complements}\label{ssec:Orthogonal}
We have the natural pairing $\EE^n \times \EE^n \to \EE$:
\[ \langle \uphi, \upsi \rangle:=\sum_i \phi_i \psi_i. \]
This pairing is left-$\EE$-linear in the first
argument and right-$\EE$-linear in the second argument.
The orthogonal complement $J^\perp$ of a left $\EE$-submodule
$J \subseteq \EE^n$ is:
\[ J^\perp=\left\{\upsi \mid \forall \uphi \in J:
\langle \uphi, \upsi \rangle=0\right\}; \]
it is a saturated right $\EE$-module whose rank equals $n-\rk J$, and
similarly for the orthogonal complement of a right submodule
$N$. The operation $\perp$ also extends to left or right
subspaces of $Q^n$.

\subsection{Connected subgroups}

\begin{lm} \label{lm:Proj}
Let $X$ be a closed, connected subgroup of $G^n$ of dimension $d$. Then
there exist surjective algebraic group homomorphisms $\alpha\colon X \to
G^d$ and $\beta\colon G^d \to X$ and a natural number $e$ such that $\alpha
\circ \beta$ and $\beta \circ \alpha$ are the multiplication with $e$
maps on $G^d$ and $X$, respectively. Moreover, if $G=\Ga$ or $G=\Gm$,
then $e$ can be taken equal to $1$.
\end{lm}

\begin{proof}
For $G=\Gm$, this follows from~\cite[Proposition III.8.5]{Borel91}. For $G=\Ga$ in characteristic zero, a closed, connected
subgroup of $\Ga^n$ is just a linear subspace of $K^n$, and the result
is basic linear algebra.  For $G=\Ga$ in positive characteristic, the
result follows from~\cite[Lemma B.1.10]{Conrad10}.

Now assume that $G$ is an elliptic curve. Then the lemma follows from
the fact that isogeny of Abelian varieties is an equivalence relation. We
recall some details. Take any maximal subset $I \subseteq [n]$ such that
the projection $\alpha: X \to G^I$ is dominant. Since algebraic group
homomorphisms have a closed image, $\alpha$ is surjective. Maximality
implies that $|I|=d$, so that $H:=\ker \alpha$ is finite.  Let $e>0$ be
the exponent of the finite group $H$. Then the multiplication map $\gamma:
X \to X, x\mapsto ex$ has $\ker \gamma \supseteq H$. Therefore $\gamma$
factors as $\beta \circ \alpha$ for some algebraic group homomorphism
$\beta:G^I \cong X/H \to X$. So $\beta \circ \alpha$ is multiplication
by $e$. Conversely, for $a \in G^I$ there exists an $x \in X$ such that
$\alpha(x)=a$, and we find
\[
\alpha(\beta(a))=\alpha(\beta(\alpha(x)))=\alpha(ex)=e\alpha(x)=ea,
\]
so also $\alpha \circ \beta$ is multiplication by $e$.  Since
multiplication by $e$ is surjective---here we use that $X$ is
connected---both $\alpha$ and $\beta$ are surjective.
\end{proof}

\begin{re}
The proof in the latter paragraph also works in the cases $G=\Gm$, except
that it does not yield $e=1$. It does not work for $G=\Ga$, since the
exponent $e$ of $H$ in the proof might be $p$, so that multiplication
with $e$ is not surjective.
\end{re}

\subsection{Proof of Theorems~\ref{thm:bijections} and~\ref{thm:duality}}

The following lemmas establish Theorem~\ref{thm:bijections}.

\begin{lm}
The maps $P$ and $E$ in the diagram are well-defined.
\end{lm}

\begin{proof}
We begin with $E$. Let $X$ be a closed, connected subgroup of $G^n$ and define
\[ J:=E(X)=\{\uphi \in \EE^n \mid \forall q \in X: \sum_i \phi_i(q_i)=0 \}. \]
A straightforward computation shows that $J$ is a left $\EE$-submodule of
$\EE^n$. To show that $J$ is saturated, suppose that $\alpha \uphi \in
J$ with $\alpha \in \EE \setminus \{0\}$ and $\uphi \in \EE^n$. Then
for each $q \in X$, $\uphi(q):=\sum_i \phi_i(q_i)$ is in the kernel
of the endomorphism $\alpha$. This kernel is a closed subset of the
one-dimensional variety $G$, and since $\alpha \neq 0$, we find that
$\ker \alpha$ is a finite set of points. Since $X$ is irreducible and
since $\uphi$, regarded as a map $G^n \to G$, is a morphism, $\uphi(X)
\subseteq \ker \alpha$ is a single point. This point is $0$ since
$\phi(0)=0$. Hence $\uphi \in J$, and $J$ is saturated as desired.

Next we show that $P$ is well-defined. Let $N$ be any right
submodule of $\EE^n$ (not necessarily saturated) and let
$\upsi^{(1)},\ldots,\upsi^{(m)}$ be a generating set of $N$.  Write
$\Psi=(\upsi^{(1)},\ldots,\upsi^{(m)})$. We think of $\Psi$ as an $n
\times m$-matrix over $\EE$. It gives rise to the group homomorphism
\begin{equation} \label{eq:Par}
\Psi: G^m \to G^n,\quad (a_1,\ldots,a_m) \mapsto
\sum_{j=1}^m \upsi^{(j)}(a_j),
\end{equation}
whose image $X$ is closed, connected subgroup of $G^n$. A straightforward
computation shows that $X=P(N)$.

The two maps at the bottom in the diagram are clearly well-defined
if $J$ is any left $\EE$-submodule of $\EE^n$, then $J^\perp$ is a
saturated right-submodule of $\EE^n$, and vice versa.
\end{proof}

\begin{lm}
The maps $J \mapsto J^\perp$ and $N \mapsto N^\perp$ (when
restricted to saturated modules) are inverse to each other.
\end{lm}

\begin{proof}
The module $(J^\perp)^\perp$ is a saturated
left $\EE$-submodule of $\EE^n$ which on the one hand contains $J$
and on the other hand has the same rank as $J$. Hence they
are equal. The same argument applies to $N$.
\end{proof}

\begin{lm} \label{lm:P}
For any right $\EE$-submodule $N$ of $\EE^n$,
$P(N)=P(N^\sat)$, and $\dim P(N)=\rk N$.
\end{lm}

\begin{proof}
From $N^\sat \supseteq N$ we immediately find $P(N^\sat) \supseteq
P(N)$. Conversely, for $\upsi \in N^\sat$ there exists an $\alpha \in
\EE$ such that $\upsi \alpha \in N$, and since the map $\alpha\colon G \to G$
is surjective, $\im (\upsi \alpha\colon G \to G^n)=\im (\upsi\colon G \to G^n)$. This
proves the first statement.

This shows that $P(N)=P(N')$ where $N' \subseteq N$ is any submodule
of $N$ of the same rank generated by vectors that are (right-)linearly
independent over $Q$. The parametrization~\eqref{eq:Par} shows that $\dim
X \leq \rk N'=\rk N$. For the converse write $d:=\rk N$. Then exists a
subset $I \subseteq [n]$ with $|I|=d$ such that the projection of $N$
in $\EE^I$ has rank $d$, and one finds vectors $v_i \in N, i\in I$
such that $v_i$ has nonzero entry $\alpha_i$ in position $i$ and zero
entries in positions $j \in I \setminus \{i\}$. Then the image of $X$
in $G^I$ contains the elements $(\delta_{ij} \alpha_i(G))_{j \in I}$
for every $i \in I$, and these generate $G^I$. So $\dim X \geq d$.
\end{proof}

\begin{lm}\label{lm:commutes}
The diagram in Theorem~\ref{thm:bijections} commutes.
\end{lm}

\begin{proof}
We concentrate on the upper triangle; the lower square
was discussed in \S~\ref{ssec:Orthogonal}. Let $X \subseteq G^n$ be a closed and connected subgroup. By
Lemma~\ref{lm:Proj}, $X$ is the image of some homomorphism $\beta:G^d
\to G^n$ where $d=\dim X$. Then $\beta=(\beta_1,\ldots,\beta_n)$
where $\beta_i:G^d \to G$ is a homomorphism. Write $\beta_{ij}$
for the composition of the embedding $G \to G^d, a \mapsto
(0,\ldots,0,a,0,\ldots,0)$ (with $a$ in the $j$-th position) and
$\beta_i$. Then $\beta=(\beta_{ij})_{ij} \in \EE^{n \times d}$; let
$N'$ be the right submodule of $\EE^n$ generated by the columns of
$\beta$. Then $X=P(N')$ by \eqref{eq:Par}, and if we write
$N=(N')^\sat$, then $X=P(N')=P(N)$ by Lemma~\ref{lm:P}.

Also by Lemma~\ref{lm:P}, $\rk N=d$. Set $J:=N^\perp$, a left module
of rank $n-d$. The orthogonality to $N$ implies that $J$ is contained
in $E(X)$. Let $m \geq n-d$ be the rank of $E(X)$. Then there is a
subset $I \subseteq [n]$ of size $m$ such that the projection of $E(X)$
in $\EE^I$ has rank $m$. Then $E(X)$ contains, for every $i \in I$,
an element of the form $\alpha_i e_i + \sum_{j \in [n] \setminus I}
\alpha_{ij} e_j$, where $e_i$ is the $i$-th standard basis vector of
$\EE^n$. This implies that the projection $X \to G^{[n] \setminus I}$
is finite-to-one. In particular, $d=\dim X \leq n-m\leq d$, so equality
must hold everywhere and $m=n-d$ and $E(X)=J$. Thus
\[ X=P(N)=P((N^\perp)^\perp)=P(J^\perp)=P(E(X)^\perp). \]
To see that the triangle also commutes when we start at some saturated
left $\EE$-module $J \subseteq \EE^n$, set $X:=P(J^\perp)$ and note that
$\dim X=n-\rk J$ (by Lemma~\ref{lm:P}) and $J \subseteq E(X)$. If
$E(X)$ were strictly
larger than $J$, then, since they are both saturated, $\rk E(X)>\rk(J)$, but then
$\rk(E(X)^\perp)<\rk(J^\perp)$ and
\[ n-\rk J=\dim X=\dim(P(E(X)^\perp))=\rk(E(X)^\perp)<n-\rk
J, \]
a contradiction (in the second equality we use that the
triangle commutes
when starting at $X$). The same reasoning applies when we start at some
saturated right $\EE$-module $N \subseteq \EE^n$.
\end{proof}

\begin{proof}[Proof of Theorem~\ref{thm:bijections}.]
By Lemma~\ref{lm:commutes}, the diagram of Figure~\ref{fig:bijections}
commutes. Now, let $X \subseteq G^n$ be a closed, connected subgroup, and set
$N:=P^{-1}(X) \subseteq \EE^n$. Let $I \subseteq [n]$, let $X_I$ be the
image of $X$ in $G^I$ ($X_I$ is a closed, connected subgroup), and $N_I$ be the
saturation of the image of $N$ in $\EE^I$.  Now we have $X_I=P(N_I)$ and
hence $\dim X_I= \rk(N_I)=\dim_Q N_I Q$ by Lemma~\ref{lm:P}.  This proves
that $M(X)$ is (a matroid and) equal to the linear matroid on $[n]$
defined by $N Q$.
\end{proof}

\begin{proof}[Proof of Theorem~\ref{thm:duality}.]
Let $A=M(X)$ for some closed, connected subgroup $X \subseteq
G^n$. Let $N:=E(X)^\perp \subseteq \EE^n$. Then the right
$Q$-vector space $V:=NQ$ of $Q^n$ determines the same matroid $A$ by
Theorem~\ref{thm:bijections}. Now the results on deletion and contraction
follow from linear algebra over $Q$.

For duality, we argue that $Q$ has an anti-automorphism $\tau$. When
$Q$ is commutative, we take $\tau=1_Q$. When $G=\Ga$ in characteristic
$p>0$, there is a unique anti-isomorphism $\tau:\EE=K[F] \to K[F^{-1}]
\subseteq Q$ that sends $F$ to $F^{-1}$ and that is the identity on
$K$. This extends uniquely to an anti-automorphism of $Q$ ($\EE$
itself does not have a natural anti-automorphism in this case!). When $G$ is a
supersingular elliptic curve in characteristic $p>0$, we use the dual
isogeny \cite[III.9, item (ii)]{Silverman09}.

Now $V^\perp$ is a left vector space over $Q$ determining the
matroid dual to $A$, and $\tau(V^\perp) \subseteq Q^n$ a right vector space
also determining the matroid dual to $A$. Let $N':=\tau(V^\perp) \cap \EE^n$.
This is a saturated right module, and the group $X := P(N')$ determines
the matroid dual to $E$.
\end{proof}

\begin{ex}\label{ex:DualU24}
Take $M$ and $\Psi$ as in Example~\ref{ex:KF}.  The orthogonal complement
$M^\perp$ is the left submodule of $K[F]^4$ given as the row space of
the matrix
\[\Psi^\perp = \left(
    \begin{array}{cccc}
      1 & 1 & 1 & 0 \\
      1 & F & 0 & 1 \\
    \end{array}
  \right).
\]
The recipe in the proof of Theorem~\ref{thm:duality}
involves taking the right $Q$-vector space spanned by the
columns of
\[
\begin{pmatrix}
1 & 1 \\
1 & F^{-1} \\
1 & 0 \\
0 & 1
\end{pmatrix}
\]
and intersecting with $K[F]^4$. This yields the right $K[F]$-module
generated by the columns of
\[
\begin{pmatrix}
1 & F \\
1 & 1 \\
1 & 0 \\
0 & F
\end{pmatrix}.
\]
The corresponding subgroup is $\{(a+b^2,a+b,a,b^2) \mid
(a,b) \in K^2\}$, a representation of the dual of $U_{2,4}$,
which of course is also $U_{2,4}$. \hfill $\clubsuit$
\end{ex}

\section{Linear flocks from vector spaces over division rings}
 \label{sec:Flocks}

In this section we introduce some of the key structures featuring in
the statement and proof of Theorem~\ref{thm:Flock}.

\subsection{Linear flocks}
\label{ssec:VSpaceFlocks} {\em Frobenius flocks} are collections of
vector spaces which are an enrichment of a valuated matroid, introduced
in~\cite{Bollen17}. Here, we describe a slight generalization called
linear flocks, where the Frobenius automorphism is replaced
by an arbitrary automorphism~\cite[Ch. 4]{Bollen18}.  Throughout what follows, we write $e_i$
for the $i$-th standard basis vector in $\ZZ^n$.

\begin{de}
Let $L$ be a field and let $\phi \colon L \rightarrow L$ an automorphism. Then a
{\em $(L,\phi)$-linear flock} is a collection of vector spaces
$(V_\alpha)_{\alpha \in \ZZ} \in L^n$, of the same dimensions, such that
\begin{enumerate}
\item For any $\alpha \in \ZZ^n$ and $1 \leq i \leq n$, $V_\alpha / i =
V_{\alpha + e_i} \setminus i$ (here $/i$ stands for intersecting with
the $i$-th coordinate hyperplane and $\setminus i$ stands for setting
the $i$-th coordinate zero); and
\item For any $\alpha \in \ZZ^n$, $V_{\alpha + (1,\ldots,1)} =
\phi(V_{\alpha})$, where $\phi$ refers to the coordinate-wise application
of the automorphism to $L^n$.
\end{enumerate}
\end{de} 

A Frobenius flock is an $(L,F^{-1})$-linear flock where $F$ is the
Frobenius automorphism of a perfect field $L$ of positive characteristic. As
shown in \cite[Thm. 34]{Bollen17}, an algebraic variety over $L$ defines a
Frobenius flock.

\subsection{Linear flocks from right vector spaces}
We now construct a linear flock from a right vector space, as follows.
Let $Q$ be a division ring with a surjective, discrete valuation
$v\colon Q \to \ZZ \cup \{\infty\}$. Valuations are well-known in
commutative algebra, but they can also be generalized to non-commutative
rings~\cite{Schilling45}. We will only use the case of value
group~$\ZZ$.

\begin{de}
A \emph{valuation} on a ring $R$ is a
function $v \colon R \rightarrow \ZZ \cup \{\infty\}$ such that:
\begin{itemize}
\item For all $a, b \in R$, $v(ab) = v(a) + v(b)$.
\item For all $a, b \in R$, $v(a + b) \geq \min\{v(a), v(b) \}$.
\end{itemize}
\end{de}

Let $R \subset Q$ be the
valuation ring, which is the set of elements with non-negative
valuation. The residue division ring $L$ is the quotient of $R$ by the
maximal ideal consisting of all elements of positive valuation. For
simplicity, we assume that $L$ is commutative, and hence a field. We
define the automorphism $\phi \colon L \rightarrow L$ by sending the
residue class of $x \in R$ to the residue class of $\pi x \pi^{-1}$,
where $\pi$ is any \emph{uniformizer}, i.e., element of valuation 1.
Thus, if $R$ is commutative, then $\phi$ is the identity.

\begin{lm}
If $Q$ is a division ring with a valuation, whose residue division ring
$L$ is commutative, then the automorphism $\phi \colon L \rightarrow L$
is well-defined.
\end{lm}

\begin{proof}
Let $\pi'$ be another element of $R$ such that $v(\pi') = 1$ and let
$x'$ be another element of $R$ with the same residue class as $x$. Thus,
$\pi' = u\pi$, where $u$ is a unit in~$R$, and $x' = x + t$, where
$v(t) > 0$. Then, using $\overline y$ to denote the residue class in $L$
of $y \in R$,
\begin{equation*}
\overline{\pi' x' (\pi')^{-1}} = \overline{u \pi (x + t) \pi^{-1}
u^{-1}} =
\overline{u} \cdot \overline{\pi x \pi^{-1}} \cdot \overline{u^{-1}} + \overline{u
\pi t \pi^{-1} u^{-1}} = \overline{\pi x \pi^{-1}},
\end{equation*}
because $L$ is commutative and $v(u \pi t
\pi^{-1} u^{-1}) = v(t) > 0$.
\end{proof}

We recall the following lemma, and include a proof for completeness.

\begin{lm} \label{lm:Reduction}
Let $V \subseteq Q^n$ be a right vector space, and let $v_1,\ldots,v_r
\in V \cap R^n$ map to a basis of the $L$-vector space $\overline{V \cap
R^n} \subseteq L^n$. Then $v_1,\ldots,v_r$ are a $Q$-basis of $V$. In
particular, $\dim_Q V = \dim_L \overline{V \cap R^n}$.
\end{lm}

\begin{proof}
Choose $v_{r+1}, \ldots, v_n \in R^n$ such that their reductions, $\overline{v_1}, \ldots,
\overline{v_n}$ form a basis of $L^n$. By Nakayama's Lemma, $v_1,
\ldots, v_n$ generate $R^n$, and therefore they generate $Q^n$ as well,
so $v_1,\ldots,v_n$ is a basis both for $Q^n$ as a right vector space,
and for $R^n$ as a right $R$-module.

Now let $w \in V$, so that $w = v_1 a_1 + \cdots +
v_n a_n$ for some $a_1, \ldots, a_n \in Q$. Set
\[ u := v_{r+1} a_{r+1} + \cdots + v_n a_n
= w-v_1 a_1 - \cdots - v_r a_r \in V.\]
Assume that $u$ is
non-zero. Then, by scaling by an appropriate power of $\pi$, we can
assume that the minimum valuation of the coordinates of $u$ is $0$,
so $u \in R^n$ and $\overline{u}$ is non-zero. Thus, $u$ can be written
as an $R$-linear combination of $v_1, \ldots, v_n$, but we already know
that $u$ is uniquely written as $v_{r+1} a_{r+1} + \cdots + v_n a_n$,
which means that $a_{r+1}, \ldots, a_n$ must be in $R$. Therefore,
$\overline{u}$ is a nonzero linear combination
$\overline{v_{r+1}}, \ldots, \overline{v_n}$, which implies that
$\overline{u}$ is not in $\overline{V \cap R^n}$, a contradiction.
Therefore, $u$ is $0$, so $w$ is in the span of $v_1, \ldots, v_r$.
This shows that $\dim_Q V = r = \dim_L \overline{V \cap R^n}$,
as desired.
\end{proof}

Now fix a right vector space $V \subset Q^n$. For
any $\alpha \in \ZZ^n$, we define 
\begin{equation*}
V_\alpha := \overline{(\pi^{-\alpha} V) \cap R^n} \subset L^n,
\end{equation*}
where $\pi^{-\alpha}$ denotes the diagonal matrix
with entries $\pi^{-\alpha_1}, \ldots, \pi^{-\alpha_n}$.

\begin{lm} \label{lm:Flock}
The vector spaces $(V_\alpha)_{\alpha \in \ZZ^n}$ defined above form
a $(L,\phi^{-1})$-linear flock.
\end{lm}

\begin{proof}
First, for each $\alpha$ we have
\[ \dim_L V_\alpha = \dim_Q \pi^{-\alpha} V = \dim_Q V =:r, \]
where the first equality follows from Lemma~\ref{lm:Reduction}.
Second, for $1 \leq i \leq n$ and $\alpha \in \ZZ^n$ we have
\[ (\pi^{-\alpha} V) \cap R^n \supseteq \pi^{e_i} ((\pi^{-\alpha-e_i}
V) \cap R^n). \]
Applying $\pi^{e_i}$ to a vector in $R^n$ and then reducing is the
same thing as first reducing and then setting the $i$-th coordinate
to zero, so we find that $V_\alpha / i \supseteq V_{\alpha+e_i}
\setminus i$. If the latter vector space has dimension $r$,
then, since the former vector space has dimension at most $r$,
the two spaces are equal. Otherwise, $V_{\alpha+e_i} \setminus i$
has dimension $r-1$. Then $V_{\alpha+e_i}$ contains the $i$-th
standard basis vector of $L^n$ and therefore $\pi^{-\alpha-e_i}V$
contains an element of the form $(a_1,\ldots,1,\ldots,a_n)$ with
$v(a_j)>0$ for $j \neq i$. Then $\pi^{-\alpha} V$ contains the vector
$(\pi^{-1} a_1, \ldots,1,\ldots,\pi^{-1} a_n)$ whose reduction has a nonzero
$i$-th coordinate, hence $V_\alpha/i$ has dimension $r-1$, as well,
so that again we have $V_{\alpha}/i=V_{\alpha+e_i} \setminus i$.

Third, every coordinate of every element of $\pi((\pi^{-\alpha} V) \cap
R^n)$ has positive valuation, and so $\pi((\pi^{-\alpha V}) \cap R^n)
\pi^{-1}$ is contained in $R^n$, and thus equal to $(\pi^{-\alpha +
(1, \cdots,1)} V) \cap R^n$. Thus, $V_{\alpha-(1, \ldots, 1)}$ is equal
to $\phi(V_{\alpha})$.
\end{proof}

By considering the matroid associated to each $V_\alpha$, a
$(L,\phi^{-1})$-linear flock defines a matroid flock, a notion
cryptomorphic to that of a valuated matroid (up to adding scalar
multiples of the all-one vector) by \cite[Theorem 7]{Bollen17}.

We will now show that for the flock from Lemma~\ref{lm:Flock}, this
valuated matroid is the natural non-commutative generalization of the
well-known construction of a matroid valuation from a vector space of a
field with a non-Archimedean valuation. For this we recall that the {\em
Dieudonn\'e determinant} is the unique group homomorphism $\det: \GL_r(Q)
\to Q^*/[Q^*,Q^*]$ that sends a diagonal matrix $\diag(c,1,\ldots,1)$
to the class of $c$ and matrices that differ from the identity matrix only in
one off-diagonal entry to $1$. We define the Dieudonn\'e determinant
of a non-invertible square matrix to be the symbol $O$, and we make
$Q^*/[Q^*,Q^*] \cup \{O\}$ into a commutative monoid by $a \cdot O:=O$.
Since commutators have valuation $0$, the valuation $v$ induces a group
homomorphism $v:Q^*/[Q^*,Q^*] \to \ZZ$, and we set $v(O)=\infty$. The
Smith normal form algorithm shows that $r \times r$-matrix $A \in
R^{r \times r}$ has $v(\det(A)) \geq 0$ with equality if and only if
$\overline{A} \in L^{r \times r}$ is invertible.

\begin{prop}\label{prop:Determinant}
Let $V \subset Q^n$ be a right vector space, and let $\mu$ denote the
matroid valuation corresponding to the matroid flock associated to the
linear flock $(V_\alpha)_\alpha$ (defined using any uniformizer
$\pi \in R$). Then the rank in $\mu$ of a subset $I \subseteq [n]$ is
the dimension of the projection of $V$ into $Q^I$; and the
valuated circuits are:
\[
\{(v(c_1), \ldots, v(c_n)) \mid c_i \in Q, [c_1 \cdots c_n] V = 0,
\mbox{and the support of $c$ is minimal}\}.
\]
Moreover, if $V$ is given as the right column span of an $n \times r$ matrix
$A$, then the valuation $\mu(B)$ of $B \in \binom{[n]}{r}$ is
equal to the valuation $v(\det A[B])$ of the Dieudonn\'e determinant of
the submatrix of $A$ consisting of the rows labeled by $B$.  
\end{prop}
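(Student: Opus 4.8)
The plan is to prove the three assertions of Proposition~\ref{prop:Determinant} in sequence, reducing each to Lemma~\ref{lm:Reduction} and the characterization of matroid valuations via flocks. For the rank statement, recall that the rank function of the matroid valuation $\mu$ attached to a matroid flock is the rank function of any one of its constituent matroids, in particular of $V_0 = \overline{V \cap R^n}$. By Lemma~\ref{lm:Reduction}, $\dim_L V_0 = \dim_Q V$, and the same lemma applied to the projection $V_I$ of $V$ into $Q^I$ (which is again a right vector space) gives that the rank of $I$ in the matroid of $V_0$ equals $\dim_Q V_I$; so the rank of $I$ in $\mu$ is indeed $\dim_Q$ of the projection of $V$ into $Q^I$. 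One subtlety: one must check that reduction commutes with projection, i.e.\ that $\overline{(V\cap R^n)}$ projected into $L^I$ equals $\overline{V_I \cap R^I}$ — this follows because a vector of $V_I\cap R^I$ of minimal coordinate valuation $0$ lifts to a vector of $V$ that can be scaled into $R^n$ with $I$-coordinates unchanged, which is exactly the content of the first paragraph of the proof of Lemma~\ref{lm:Reduction}.

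For the valuated circuits, I would use that the valuated circuits of $\mu$ are determined by the flock together with the cryptomorphism of \cite[Theorem 7]{Bollen17}: a minimal-support circuit of $\mu$, up to adding a constant, records for each shift $\alpha$ how the matroid of $V_\alpha$ degenerates. Concretely, a minimal dependent covector of $V$ over $Q$ is a left vector $c = [c_1\cdots c_n]$ with $cV = 0$ and minimal support; the assertion is that $(v(c_1),\dots,v(c_n))$ is the corresponding valuated circuit. I would verify this by the standard argument: scale $c$ so that $\min_i v(c_i) = 0$; then $\overline c$ is a nonzero vector with $\overline c\, V_0 = 0$ (using that $cV=0$ survives reduction once $c\in R^n$ and $V$ has a basis in $R^n$), hence $\overline c$ detects that the support of $c$ is dependent in $V_0$; and more generally, replacing $V$ by $\pi^{-\alpha}V$ replaces $c$ by $c\pi^{\alpha}$, whose entrywise valuations are $v(c_i)+\alpha_i$, so the position of the minimum over the support of $c$ migrates exactly as the flock axioms predict. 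Matching this against the cryptomorphism formula for valuated circuits yields the claim.

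For the Dieudonné determinant formula, fix $B\in\binom{[n]}{r}$ and assume $V$ is the right column span of $A\in Q^{n\times r}$. The matroid-flock cryptomorphism gives $\mu(B)$ as $\min_\alpha(\text{stuff})$; the cleanest route is to compute $\mu(B)$ directly from the flock at a well-chosen shift. The idea: $\mu(B)$ is finite iff the rows of $A$ indexed by $B$ are a basis, i.e.\ $A[B]\in\GL_r(Q)$; multiplying $A$ on the right by $A[B]^{-1}$ (a right change of basis of the column space, which does not change $V$) replaces $A[B]$ by the identity and multiplies $\det A[B]$ by $1$ in $Q^*/[Q^*,Q^*]$ — wait, this changes the determinant, so instead I track the change: $v(\det(A[B]))$ is well-defined on $Q^*/[Q^*,Q^*]$ since commutators have valuation zero (as noted in the excerpt), and after the normalization $A \rightsquigarrow A A[B]^{-1}$ the new matrix has $v(\det(\cdot)[B]) = 0$. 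Then I would invoke the Smith-normal-form remark in the excerpt: choosing $\alpha$ so that the $B$-rows of $\pi^{-\alpha}(A A[B]^{-1})$ lie in $R^r$ with reduction of full rank, $V_\alpha$ restricted to coordinates $B$ is all of $L^r$, which pins down $\mu(B)$ in terms of $\alpha$; unwinding $\alpha$ back through the right multiplication by $A[B]$ recovers $v(\det A[B])$. The same Smith-normal-form statement handles the non-invertible case, giving $\mu(B)=\infty=v(O)$.

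The main obstacle will be the non-commutativity bookkeeping in the last part: because $V$ is a \emph{right} vector space, changing basis of the column span is right multiplication by a matrix over $Q$, the Dieudonné determinant is only valued in $Q^*/[Q^*,Q^*]$, and the diagonal scalings $\pi^{-\alpha}$ act on the left; keeping these three actions straight — and in particular checking that $v\circ\det$ is genuinely invariant under the right change of basis and transforms correctly under the left $\pi^{-\alpha}$ — is where the argument could go wrong. Everything else is a faithful translation of the classical valuated-matroid-from-a-subspace construction, with Lemma~\ref{lm:Reduction} and the Smith normal form / Dieudonné determinant facts already supplied doing the real work.
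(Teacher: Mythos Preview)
Your opening move for the rank statement is wrong: the claim that ``the rank function of the matroid valuation $\mu$ attached to a matroid flock is the rank function of any one of its constituent matroids'' is false. The matroids $M(V_\alpha)$ genuinely vary with $\alpha$---that is the whole point of a flock---and the underlying matroid of $\mu$ (whose bases are the $B$ with $\mu(B)<\infty$) is the \emph{generic} one, not $M(V_0)$. Concretely, take $n=2$, $r=1$, and $V=(1,\pi^{-1})Q$. Then $V\cap R^2$ is generated by $(\pi,1)$, so $V_0=L\cdot(0,1)$ and element $1$ is a loop in $M(V_0)$; but $\{1\}$ is a basis of the underlying matroid of $\mu$ since the first row of $A=(1,\pi^{-1})^T$ is nonzero. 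The same example kills the ``reduction commutes with projection'' subtlety you flagged: projecting $V_0$ to $L^{\{1\}}$ gives $0$, whereas $\overline{V_{\{1\}}\cap R}=L$. So that step does not go through, and your rank argument collapses.

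The paper avoids this by reversing the order of the three assertions. It first proves the Dieudonn\'e determinant formula directly, using the characterization from \cite[Theorem~7]{Bollen17}: $\mu$ is (up to an all-ones shift) the unique function such that for every $\alpha$, the expression $\mu(B)-e_B^T\alpha$ is minimized exactly at the bases of $M(V_\alpha)$. To check that $B\mapsto v(\det A[B])$ has this property, fix $\alpha$ and a basis $B_0$ of $M(V_\alpha)$, choose $v_1,\dots,v_r\in(\pi^{-\alpha}V)\cap R^n$ reducing to a basis of $V_\alpha$, and let $g\in\GL_r(Q)$ be such that $A':=\pi^{-\alpha}Ag$ has columns $v_1,\dots,v_r$. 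Then $v(\det A[B])-e_B^T\alpha=v(\det A'[B])+v(\det g^{-1})$, and since $A'$ has entries in $R$ one has $v(\det A'[B])\geq 0$ with equality iff $\overline{A'[B]}$ is invertible, i.e.\ iff $B$ is a basis of $M(V_\alpha)$. With the determinant formula in hand, the rank statement is immediate ($B$ is a basis of $\mu$ iff $A[B]$ is invertible), and the circuit statement is obtained by normalizing $A$ so that $A[C\setminus\{i\}]$ is the identity and reading off the unique linear relation among the rows of $A[C]$. Your sketch of the determinant part is in the right spirit but vaguer than this; the clean step you are missing is to verify the minimization characterization globally rather than trying to ``unwind'' from one particular $\alpha$.
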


\begin{proof}
By \cite[Theorem 7]{Bollen17}, the valuation $\mu$ can be characterized
as follows (again, modulo scalar multiples of the all-one vector):
for all $\alpha \in \ZZ^n$ the expression $\mu(B)-e_B^T \alpha$,
where $e_B:=\sum_{i \in B} e_i$ is the characteristic vector of $B$,
is minimized by $B_0 \in \binom{[n]}{r}$ if and only if $B_0$ is a basis
for the matroid $M(V_\alpha)$ on $[n]$ defined by $V_\alpha$. It suffices
to prove that the numbers $v(\det A[B])$ have this property.

Suppose that $B_0$ is a basis for $M(V_\alpha)$. Choose
$v_1,\ldots,v_r \in (\pi^{-\alpha} V) \cap R^n$ such that
$\overline{v_1},\ldots,\overline{v_r}$ are a basis of $V_\alpha$.
Let $g \in \GL_r(Q)$ be the unique matrix such that $A':=\pi^{-\alpha}
A g$ has columns $v_1,\ldots,v_r$. Then for any $B \in \binom{[n]}{r}$
 we have
\begin{align*}
v(\det(A[B]))-e_B^T \alpha
&= v(\det(\pi^{\alpha}[B]) \det(A'[B]) \det(g^{-1})) - e_B^T \alpha\\
&= v(\det(A'[B])) + v(\det(g^{-1})) + (e_B^T \alpha - e_B^T \alpha)\\
&\geq v(\det(A'[B_0])) + v(\det(g^{-1})) \\
&=v(\det(A[B_0]))-e_{B_0}^T \alpha,
\end{align*}
where the inequality follows because $A'$ has entries in $R$,
and $v(\det(A'[B_0]))=0$ since $\overline{A'[B_0]}$ is invertible.
Moreover, equality holds if and only if also $B$ is a basis in the matroid
$M(V_\alpha)$. This proves the last statement in the lemma, and also that
$B$ is a basis in the underlying matroid of $\mu$ if and only if $A[B]$ is
invertible; this, in turn, implies the statement about the rank function.

Finally, recall that the valuated circuits corresponding to the basis
valuation $\mu$ are the vectors $\gamma \in (\ZZ \cup \{\infty\})^n$
that are supported precisely on some circuit $C \subseteq [n]$ of the
matroid underlying $\mu$ and have the property that for each $i \in C$,
and for each $j \in C - i:=C \setminus \{i\}$ we have
\[ \mu(C-i)+\gamma_j = \mu(C-j) + \gamma_i. \]
Without loss of generality, we may assume that $C-i = [r]$ and that $i=r+1$.
After performing column operations on $A$ over $Q$, we may assume that
$A[C-i]$ is the identity matrix, so that $\mu(B)=0$. 

Then the unique
linear relation among the rows of $A[[r+1]]$ is the vector
$a=(a_{r+1,1},\ldots,a_{r+1,r},-1,0,\ldots,0)$. We have, for
each $j=1,\ldots,r$, 
\[ \mu(C-i)+v(a_{r+1,j})=v(a_{r+1,j})=v(\det
A[C-j])=\mu(C-j )+v(-1), \]
which means that $v(a)$ satisfies the condition on $\gamma$ above. This
proves the statement about valuated circuits.
\end{proof}

\section{Lindstr\"om valuations and Frobenius flocks} \label{sec:CharP}

The goal of this section is to prove Theorem~\ref{thm:Flock}.  Let $p$ be
a fixed prime number, $K$ an algebraically closed field of characteristic
$p$, and $G$ a one-dimensional algebraic group over $K$. As before, $\EE$
denotes the endomorphism ring of $G$. We will construct a valuation on
$\EE$, introduce Frobenius flocks and Lindstr\"om valuations of closed,
connected subgroups of $G^n$, and relate these to the constructions of
Section~\ref{sec:Flocks}.

\subsection{The valuation on $\EE$} \label{ssec:Valuation}
In order to define the valuation $v \colon \EE \rightarrow \ZZ \cup \{\infty\}$, we
let $\alpha \in \EE$ be a nonzero endomorphism of $G$ and then obtain an
injective homomorphism $\alpha^*\colon K(G) \to K(G)$
of function fields. If we let $L$ be the set of elements of $K(G)$ which
are purely inseparable over $\alpha^*K(G)$, then $L/\alpha^*K(G)$ is a
purely inseparable extension of fields, and $K(G) / L$ is a separable
extension. The degree $[L : \alpha^* K(G)]$ is a power of $p$, and is
called the inseparable degree of the extension $K(G)/\alpha^*K(G)$, and
denoted $[K(G) : \alpha^* K(G)]_i$ \cite[\S V.6]{Lang02}. We define our
valuation by $v(\alpha) = \log_p [K(G) \colon \alpha^* K(G)]_i$ for
$\alpha$ non-zero, and
$v(0) = \infty$.

The main technical point to proving that $v$ is a valuation is the
following:

\begin{lm}\label{lm:InsepSubfield}
If $K'$ is a subfield of $K(G)$ such that $K(G)/K'$ is purely inseparable,
then $K' = F^n K(G)$ for some integer $n$, where
$F$ is the Frobenius endomorphism of $K(G)$.
\end{lm}

\begin{proof}
Let $x$ and $y$ be two elements in $K(G) \setminus K$. Since $K(G)/K$
has transcendence rank 1, $x$ and $y$ are algebraically dependent,
meaning that they satisfy a polynomial relation $f(x,y)$, where $f \in
K[X,Y]$. If every exponent of $f$ is divisible by $p$, then $f = g^p$,
using the fact that $K$ is algebraically closed. We can assume that $f$
is irreducible, which means that at least one exponent is not divisible
by $p$. Therefore, either $x$ is separable over $K(y)$ or $y$ is
separable over $K(x)$. Thus, the purely inseparable subfields of $K(G)$
are totally ordered by inclusion.

The fields $K(G) \supset FK(G) \supset F^2K(G) \supset \ldots$ also form
a chain of purely inseparable subfields of $K(G)$. Each containment has
index $p$, so there can't be any intermediate fields. Since $K(G)/K'$ is
purely inseparable, then $K' \neq K$, and so $K'(G)/K$ has finite index.
Therefore, $K'$ can't be contained in all the $F^nK(G)$, but since the
purely inseparable subfields are totally ordered, then it must be equal
to $F^nK(G)$ for some~$n$.
\end{proof}

\begin{prop}
The function $v \colon \EE \rightarrow \ZZ_{\geq 0} \cup \{\infty\}$
defines a valuation on $\EE$ that extends uniquely to $Q$.
\end{prop}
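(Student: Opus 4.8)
The plan is to verify the two valuation axioms for $v$ on $\EE$, namely multiplicativity $v(\alpha\beta)=v(\alpha)+v(\beta)$ and the ultrametric inequality $v(\alpha+\beta)\geq\min\{v(\alpha),v(\beta)\}$, and then to invoke a standard extension result to pass from the domain $\EE$ to its division ring of fractions $Q$. For multiplicativity, I would use the tower formula for inseparable degrees: for nonzero $\alpha,\beta\in\EE$ we have $(\alpha\beta)^*=\beta^*\alpha^*$, so $K(G)\supseteq \alpha^*K(G)\supseteq \beta^*\alpha^*K(G)$, and the inseparable degree is multiplicative in towers \cite[\S V.6]{Lang02}: $[K(G):\beta^*\alpha^*K(G)]_i = [K(G):\alpha^*K(G)]_i\cdot[\alpha^*K(G):\beta^*\alpha^*K(G)]_i$. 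The second factor equals $[K(G):\beta^*K(G)]_i$ because $\alpha^*$ is a field isomorphism from $K(G)$ onto $\alpha^*K(G)$ carrying $\beta^*K(G)$ onto $\beta^*\alpha^*K(G)$, hence preserving inseparable degree. Taking $\log_p$ gives $v(\alpha\beta)=v(\alpha)+v(\beta)$, and both sides are $\infty$ if either factor is $0$.

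For the ultrametric inequality, the key is Lemma~\ref{lm:InsepSubfield}: for nonzero $\alpha$, the purely inseparable closure $L_\alpha$ of $\alpha^*K(G)$ inside $K(G)$ has the form $F^{v(\alpha)}K(G)$, where $F$ is the Frobenius on $K(G)$. Thus $v(\alpha)\geq m$ is equivalent to $\alpha^*K(G)\subseteq F^mK(G)$, i.e. every element of $\alpha^*K(G)$ is a $p^m$-th power in $K(G)$. Now suppose $v(\alpha),v(\beta)\geq m$ with $\alpha,\beta,\alpha+\beta$ all nonzero. For the coordinate function (or any $x\in K(G)$), $(\alpha+\beta)^*x = x\circ(\alpha+\beta)$, and here I would use that $G$ is an algebraic group so that addition $G\times G\to G$ is a morphism; pulling back along the diagonal composed with $\alpha\times\beta$ shows $(\alpha+\beta)^*K(G)$ is contained in the compositum $\alpha^*K(G)\cdot\beta^*K(G)$ inside $K(G)$. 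Since both $\alpha^*K(G)$ and $\beta^*K(G)$ lie in $F^mK(G)=K(G)^{p^m}$, their compositum does too (the $p^m$-th powers form a subfield), so $(\alpha+\beta)^*K(G)\subseteq F^mK(G)$ and hence $v(\alpha+\beta)\geq m$. This gives $v(\alpha+\beta)\geq\min\{v(\alpha),v(\beta)\}$, and the cases where some term vanishes are immediate.

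For the extension to $Q$: since $\EE$ is an Ore domain with division ring of fractions $Q$, a valuation on $\EE$ extends to $Q$ by setting $v(\alpha\beta^{-1})=v(\alpha)-v(\beta)$ (equivalently $v(\gamma):=v(\alpha)-v(\beta)$ for any representation $\gamma=\beta^{-1}\alpha$, using multiplicativity on $\EE$ to check well-definedness, and the common-denominator trick in an Ore domain to check the triangle inequality). Uniqueness is forced: any valuation on $Q$ restricting to $v$ on $\EE$ must satisfy $v(\beta^{-1})=-v(\beta)$. I would cite \cite{Schilling45} for the non-commutative version of this standard fact, or simply carry out the short verification.

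The main obstacle is the ultrametric inequality, specifically pinning down why $(\alpha+\beta)^*K(G)$ lands in the compositum $\alpha^*K(G)\cdot\beta^*K(G)$: this is exactly where the group structure enters, via the fact that the addition morphism $\mu\colon G\times G\to G$ induces $\mu^*\colon K(G)\hookrightarrow K(G\times G)=\operatorname{Frac}(K(G)\otimes_K K(G))$, and that $(\alpha+\beta)=\mu\circ(\alpha,\beta)$ forces the pullback to factor through the subfield generated by the two copies. Once that containment is in hand, the reduction to "$p^m$-th powers form a subfield" via Lemma~\ref{lm:InsepSubfield} is clean. One should also double-check the edge cases and that $v$ takes values in $\ZZ_{\geq0}$ on $\EE$ (immediate, since $\alpha^*K(G)\subseteq K(G)$ makes the inseparable degree a nonnegative power of $p$), so that the only negative values appear after passing to $Q$.
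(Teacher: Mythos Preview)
Your approach is the same as the paper's: multiplicativity via the tower formula for inseparable degree, the ultrametric inequality via Lemma~\ref{lm:InsepSubfield} and the containment $(\alpha+\beta)^*K(G)\subseteq \alpha^*K(G)\cdot\beta^*K(G)$, and the extension to $Q$ via the Ore condition. The paper carries out that last step in full rather than citing it, but the content is identical.

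There is, however, a genuine slip in your multiplicativity argument. From $(\alpha\beta)^*=\beta^*\alpha^*$ you write the tower $K(G)\supseteq \alpha^*K(G)\supseteq \beta^*\alpha^*K(G)$ and justify the second factor by saying that $\alpha^*$ carries $\beta^*K(G)$ onto $\beta^*\alpha^*K(G)$. That last claim asserts $\alpha^*\beta^*=\beta^*\alpha^*$, i.e.\ $(\beta\alpha)^*=(\alpha\beta)^*$, which fails whenever $\EE$ is non-commutative; and the inclusion $\beta^*\alpha^*K(G)\subseteq\alpha^*K(G)$ is false in general (take $G=\Ga$, $\alpha=F^2+F$, $\beta$ multiplication by a generic constant). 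The fix is to use the other tower, $K(G)\supseteq \beta^*K(G)\supseteq \beta^*(\alpha^*K(G))$: here the middle step has inseparable degree $p^{v(\beta)}$ by definition, and the bottom step has inseparable degree $p^{v(\alpha)}$ because $\beta^*$ is an isomorphism of $K(G)$ onto $\beta^*K(G)$ carrying $\alpha^*K(G)$ onto $\beta^*(\alpha^*K(G))$. This is exactly the multiplicativity the paper cites from \cite[Cor.~V.6.4]{Lang02}.

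One smaller point: in the ultrametric step, the field you call the ``purely inseparable closure'' $L_\alpha$ and identify with $F^{v(\alpha)}K(G)$ is the \emph{separable} closure of $\alpha^*K(G)$ in $K(G)$ (so that $K(G)/L_\alpha$ is purely inseparable and Lemma~\ref{lm:InsepSubfield} applies). The set of elements purely inseparable over $\alpha^*K(G)$, which is the $L$ in the paper's definition of $v$, sits on the other side of the factorization and is not of the form $F^mK(G)$. Your conclusion $\alpha^*K(G)\subseteq F^{v(\alpha)}K(G)$ is correct either way, so this is only a terminological wrinkle.
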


\begin{proof}
In order to show that $v$ is a valuation on $\EE$, we first note that
$v(\alpha \beta) = v(\alpha) + v(\beta)$ because of the multiplicativity
of inseparable degree \cite[Cor.~V.6.4]{Lang02}.

Second, we want to show that if $\alpha$ and $\beta$ are elements of
$\EE$, then $v(\alpha + \beta) \geq \min\{v(\alpha), v(\beta)\}$. By
Lemma \ref{lm:InsepSubfield}, $\alpha^* K(G)$ and $\beta^* K(G)$ are contained
in $F^{v(\alpha)} K(G)$ and $F^{v(\beta)} K(G)$, respectively, and
thus both are contained in $F^{\min\{v(\alpha), v(\beta)\}} K(G)$.
Since $(\alpha+\beta)^* K(G)$ is contained in the compositum of
$\alpha^* K(G)$ and $\beta^* K(G)$, then it is contained in $F^{\min\{
v(\alpha, v(\beta)\}} K(G)$, and thus $v(\alpha + \beta) \geq
\min\{v(\alpha), v(\beta)\}$.

Next we want to show that $v$ extends to a unique valuation on $Q$,
which is an exercise in working with rings of fractions and the Ore
condition. We refer to \cite[Chapter 1]{Cohn95} for an introduction to
Ore domains. Every element of $Q$ can be written as a fraction $\phi^{-1} \psi$ for
some $\phi, \psi \in \EE$, and we define $v(\phi^{-1} \psi) = -v(\phi) +
v(\psi)$. If we write $\phi^{-1} \psi$ instead as $(\tau\phi)^{-1}
\tau\psi$ for some $\tau \in \EE$, then
\begin{equation*}
v((\tau\phi)^{-1} \tau \psi) = -v(\tau\phi) + v(\tau\psi)
= -v(\tau) - v(\phi) + v(\tau) + v(\psi)
= -v(\phi) + v(\psi) = v(\phi^{-1} \psi),
\end{equation*}
which shows that this valuation is well-defined. Since the valuation is
defined using elements of $\EE$ and their reciprocals, this
is the unique valuation which extends the one on $\EE$.

Now we want to show that $v$ defines a valuation on $Q$.
Again, we write $\phi^{-1} \psi$ and $\sigma^{-1} \tau$ for elements of $Q$, where
$\phi$, $\psi$, $\sigma$, and $\tau$ are in $\EE$. Then we take
their product by finding $\psi'$ and $\sigma'$ in $\EE$ such that
$\sigma' \psi = \psi' \sigma$ (the existence of such elements is the
left Ore condition), and then $(\phi^{-1} \psi) (\sigma^{-1}
\tau) = (\sigma' \phi)^{-1} \psi' \tau$. The valuation of this product  is:
\begin{align*}
v((\sigma' \phi)^{-1} \psi' \tau) &= -v(\sigma' \phi) + v(\psi'\tau)
= -v(\sigma') -v(\phi) + v(\psi') + v(\tau) \\
&= -v(\phi) - v(\sigma) + v(\psi) + v(\tau) = v(\phi^{-1}\psi) +
v(\sigma^{-1}\tau),
\end{align*}
where the first equality on the second line is by our assumption that
$\sigma' \psi = \psi' \sigma$.

Second, to compute the sum of $\phi^{-1} \psi$ and $\sigma^{-1} \tau$,
we find $\phi'$ and $\sigma'$ in $\EE$ such that $\sigma' \phi = \phi'
\sigma$ and then:
\begin{equation*}
\phi^{-1} \psi + \sigma^{-1} \tau = (\sigma' \phi)^{-1} (\sigma' \psi +
\phi' \tau).
\end{equation*}
If we take the valuation of this sum, we get:
\begin{align*}
v((\sigma' \phi)^{-1} (\sigma' \psi + \phi' \tau)) &= -v(\sigma') -
v(\phi) + v(\sigma' \psi + \phi' \tau) \\
&\geq -v(\sigma') - v(\phi) + \min\{v(\sigma') + v(\psi), v(\phi') +
v(\tau)\} \\
&= \min\{-v(\phi) + v(\psi), -v(\sigma') -v(\phi) + v(\phi') + v(\tau)\}
\\
&= \min\{-v(\phi) + v(\psi), -v(\sigma) + v(\tau)\} \\
&= \min\{v(\phi^{-1} \psi), v(\sigma^{-1} \tau)\},
\end{align*}
which completes the proof the $v$ is a valuation on $\EE$.
\end{proof}

The set of all elements with positive valuation is a two-sided ideal
$I$ in $\EE$. If $G$ is defined by equations with coefficients in
$\FF_p$, such as $\Ga$ or $\Gm$, then the Frobenius homomorphism itself
is an endomorphism of $G$. Moreover, this element $F$ generates $I$ as
either a left, right or two-sided ideal. When $G$ is an elliptic curve
not defined over $\FF_p$, then $I$ need not be principal.

\subsection{Description of the valuation on $\EE$}

Recall that an elliptic curve in positive characteristic is called
\emph{supersingular} if its ring of endomorphisms is non-commutative,
and thus an order in a quaternion algebra \cite[\S V.3]{Silverman09}.

\begin{prop}\label{prop:Class}
For each connected one-dimensional algebraic group $G$, the valuation
on $\EE$ is as follows:
\begin{enumerate}
\item If $G \cong \Ga$, so that $\EE$ is the ring of $p$-polynomials
$K[F]$, then $v$ is the $F$-adic valuation.
\item If $G \cong \Gm$, so that $\EE \cong \ZZ$, with $F$ corresponding
to $p$, then $v$ is the $p$-adic valuation.
\item If $G \cong E$, an elliptic curve with $j$-invariant
not in $\overline{\FF_p}$, then $\EE \cong \ZZ$ and $v$ is the $p$-adic
valuation.
\item If $G \cong E$, a non-supersingular elliptic curve with
$j$-invariant in $\overline{\FF_p}$, then $\EE$ is an order in a quadratic
number field $\QQ(\sqrt{-D})$. Let $\mathcal O \supset \EE$ denote the
ring of integers in $\QQ(\sqrt{-D})$. Then there exists a maximal ideal
$m \subset \mathcal O$ such that $m \overline m = (p)$, where $\overline
m$ denotes complex conjugation, and $v$ is the
restriction of the $m$-adic valuation.
\item \label{item:quaternion} If $G \cong E$, a supersingular elliptic curve with
$j$-invariant in $\overline{\FF_p}$ then $\EE$ is an
order in a quaternion algebra, and $v(\alpha)$ is the $p$-adic valuation
of $\alpha \overline \alpha$.
\end{enumerate}
\end{prop}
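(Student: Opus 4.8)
The plan is to prove Proposition~\ref{prop:Class} case by case, in each case computing the inseparable degree $[K(G) : \alpha^* K(G)]_i$ directly from the explicit description of $\EE$ given in \S\ref{ssec:Endo}, and comparing it with the claimed valuation. The key observation throughout is that, by Lemma~\ref{lm:InsepSubfield}, the only purely inseparable subfields of $K(G)$ are the $F^m K(G)$, so $v(\alpha)$ is simply the largest $m$ such that $\alpha^* K(G) \subseteq F^m K(G)$; equivalently, $v(\alpha)$ measures ``how many Frobenius twists $\alpha$ factors through.''

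\textbf{The two easy cases.} For $G = \Ga$ with $\EE = K[F]$, an element $\alpha = \sum_{i \geq k} a_i F^i$ with $a_k \neq 0$ factors as $\alpha = F^k \beta$ where $\beta$ has nonzero constant term, hence $\beta$ is separable (its derivative as a $p$-polynomial is the nonzero scalar $a_k$). So $\alpha^* K(G) \subseteq F^k K(G)$ but not $F^{k+1} K(G)$, giving $v(\alpha) = k$, the $F$-adic valuation; this proves (1). For $G = \Gm$ with $\EE \cong \ZZ$, the endomorphism $a$ is the map $t \mapsto t^a$; writing $a = p^k b$ with $p \nmid b$, the map $t \mapsto t^b$ is separable (its degree is prime to $p$, equivalently $\gcd(b,p)=1$ forces separability of $t^b - c$), while $t \mapsto t^{p^k}$ is the $k$-fold Frobenius. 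Hence $v(a) = v_p(a)$, proving (2). Case (3) is identical to (2) since the hypothesis forces $\EE \cong \ZZ$, with the integer $m$ again acting by an isogeny of degree $m^2$ whose inseparable degree is $p^{2v_p(m)}$—wait, I must be careful about the normalization: for an elliptic curve, multiplication by $m$ has degree $m^2$ but I claim its \emph{inseparable} degree is $p^{v_p(m)}$ per coordinate of $G$, i.e. $v(m) = v_p(m)$; this follows because $[m] = [p]^{v_p(m)} \circ [m']$ with $m'$ prime to $p$, $[m']$ separable, and $[p]$ has inseparable degree exactly $p$ on an ordinary curve (the kernel $E[p]$ has order $p$).

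\textbf{The two elliptic cases with $j \in \overline{\FF_p}$.} For the ordinary case (4), $\EE$ is an order in $\QQ(\sqrt{-D})$; the curve is ordinary, so $[p]$ has inseparable degree $p$, and $p$ splits in $\mathcal O$ as $m \overline m$ (this is precisely the classical fact that an ordinary elliptic curve over $\overline{\FF_p}$ has $p$ split in its CM field; the Frobenius isogeny and its dual give the two factors). One factor, say the Frobenius $\pi$, is purely inseparable of degree $p$, the other $\overline\pi$ is separable. For $\alpha \in \EE$, writing its factorization into primes of $\mathcal O$, the inseparable degree is $p^{\mathrm{ord}_m(\alpha)}$, so $v = v_m$. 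For the supersingular case \eqref{item:quaternion}, $\EE$ is an order in a quaternion algebra $B$ ramified at $p$ and $\infty$; here $[p] = \pi \overline\pi$ with $\pi$ the Frobenius, but now $\pi$ is purely inseparable of degree $p$ \emph{and} $\overline\pi$ is also purely inseparable of degree $p$ (on a supersingular curve $[p]$ has trivial kernel, inseparable degree $p^2$). So $v([p]) = 2$. The reduced norm $\mathrm{Nrd}(\alpha) = \alpha\overline\alpha$ equals $\deg(\alpha)$, and since $\alpha$ is totally inseparable of some degree dividing a power of $p$... actually one shows directly $[K(G) : \alpha^*K(G)]_i$ has $p$-valuation equal to $v_p(\deg \alpha) = v_p(\alpha\overline\alpha)$, because the maximal separable subextension corresponds to the étale quotient of $\ker\alpha$, whose order is the separable degree, and for a supersingular curve the relation between $\ker\alpha$ and the norm forces the inseparable degree to be the full $p$-part of $\deg\alpha$. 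This is the identity claimed in (5).

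\textbf{The main obstacle} is the supersingular case \eqref{item:quaternion}: one needs the nontrivial input that on a supersingular elliptic curve the Frobenius isogeny is purely inseparable of degree $p$ with \emph{trivial} kernel-as-a-group-scheme's étale part at \emph{both} $\pi$ and its dual, so that the entire $p$-part of any $\deg\alpha$ contributes to inseparability; this is where the acknowledgment to Aurel Page's MathOverflow answer (cited for Lemma~\ref{lm:ValOne}) is presumably relevant. I would handle it by invoking the structure of the kernel of multiplication-by-$p$ on supersingular curves (a non-reduced group scheme of rank $p^2$ with no étale quotient), from \cite[\S V.3]{Silverman09}, together with the fact that $\deg = \mathrm{Nrd}$ on the quaternion order. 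The remaining bookkeeping—checking that $v$ defined via inseparable degree agrees with the $m$-adic or $p$-adic valuation and not merely proportional to it—is routine once the degree-$p$ versus degree-$p^2$ dichotomy for $[p]$ is established in each case.
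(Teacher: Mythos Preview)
Your proposal is correct and in cases (1)--(4) follows essentially the same strategy as the paper: factor through Frobenius in the affine cases, and in the elliptic cases pin down the valuation by computing $v(p)$ and (in case (4)) exhibiting the splitting of $p$ via a Frobenius-type endomorphism and its separable dual. One small imprecision: in case (4) the bare Frobenius $F\colon E\to E^{(p)}$ is generally \emph{not} an endomorphism of $E$; the paper therefore passes to the $k$-th power $\alpha_k$ where $E^{(p^k)}\cong E$, which is the correct object to identify with a generator of the prime $m$. Your sketch should be read with this adjustment.

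The genuine divergence is in case~\eqref{item:quaternion}. You argue via the connected--\'etale decomposition of $\ker\alpha$: on a supersingular curve $E[p^n]$ is connected for all $n$, hence the $p$-primary part of $\ker\alpha$ is connected, forcing the separable degree to be prime to $p$ and thus $v(\alpha)=v_p(\deg\alpha)=v_p(\alpha\overline\alpha)$. This is valid and conceptually clean. The paper instead uses a short inequality trick: from $v(p)=2$ and $v(e)=0$ for $e$ prime to $p$ one gets $v(\alpha)+v(\overline\alpha)=v(\alpha\overline\alpha)=2k$, while the trivial bound ``inseparable degree divides degree'' gives $v(\alpha)\leq k$ and $v(\overline\alpha)\leq k$ separately, forcing equality. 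The paper's argument is more elementary (no group-scheme input beyond \cite[Thm.~V.3.1]{Silverman09}); yours is more structural and explains \emph{why} the bound is sharp rather than squeezing it.

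Finally, your guess about the Aurel Page acknowledgment is off: that MathOverflow reference is for Lemma~\ref{lm:ValOne} (existence of a uniformizer in~$\EE$ in the supersingular case), not for this proposition, whose case~(5) needs only standard facts from \cite{Silverman09}.
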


\begin{proof}
The first two cases follow from the fact that in each case Frobenius is
an endomorphism of $G$, corresponding to $p$ and $F$, respectively.

In case (3), we know that multiplication by $p$, a morphism of degree
$p^2$ by \cite[Thm III.6.2(d)]{Silverman09}, is inseparable but not
purely inseparable, and hence $v(p)=1$. Thus, $v$ must be the $p$-adic
valuation.

In case (4), we know that any valuation on $\QQ(\sqrt{-D})$ corresponds
to a maximal ideal $m$ of $\mathcal O$, and the multiplication by $p$
endomorphism is inseparable, but not purely inseparable so $v(p) = 1$.
Thus, it remains to show that the rational prime $p$ splits in $\mathcal
O$. This is a standard fact in the theory of elliptic curves, but we
include a proof for convenience.

We define $G^{(p^i)}$ to be the elliptic curve 
obtained by applying the $i$th power of Frobenius
to the defining equations of the elliptic curve $G$. Since $G$ is
defined over $\overline \FF_p$, $G^{(p^k)} \cong G$ for some positive
integer $k$, thus composition with the $k$th power of Frobenius followed
by this isomorphism defines
an endomorphism of $G$, which we will denote $\alpha_k \colon G \rightarrow G$.
In particular, $\alpha_k$ is a degree $p^k$ purely inseparable
endomorphism and thus $v(\alpha_k) = k$.
The dual homomorphism of $F$ is separable by
equivalence (ii) of \cite[Thm. V.3.1(a)]{Silverman09}, and so the dual
endomorphism $\overline \alpha_k$ is separable, meaning that $v(\overline
\alpha_k) = 0$. Since $\alpha_k \overline \alpha_k = p^k$ by \cite[Thm.
III.6.2(a)]{Silverman09}, then $(p)$ is not a prime ideal, and
$\overline \alpha_k \not\in m$.

In case (5), the endomorphism $p$ is purely inseparable by the
equivalence (iii) of \cite[Thm. V.3.1(a)]{Silverman09}, and has degree
$p^2$ by \cite[Thm. III.6.2(d)]{Silverman09}. Therefore, $v(p) = 2$. Let
$\alpha$ be any endomorphism in $\EE$, and $\overline \alpha$ its dual.
Then $\alpha \overline \alpha = d$, where $d$ is the degree of $\alpha$
and of $\overline \alpha$, by \cite[Thm. III.6.2(a)]{Silverman09}. If we
write $d = p^k e$, where $p$ does not divide $e$, so that $k$ is the
$p$-adic valuation of $d$, then $v(d) = k v(p) + v(e) \geq 2k$. On the
other hand, the inseparable degree of $\alpha$ divides $d$ and is a
power of $p$, so $v(\alpha) \leq k$, and similarly $v(\overline\alpha)
\leq k$. By the multiplicativity of valuation, $v(\alpha) = k$, which is
what we wanted to show.
\end{proof}

We next want to show that the valuation on $\EE$ is surjective. Since we
already know that the residue division ring of $Q$ is commutative, this
means we can use the results of \S~\ref{sec:Flocks}.

\begin{lm}\label{lm:ValOne}
There exists an element $\pi \in \EE$ with $v(\pi) = 1$.
\end{lm}

\begin{proof}
We consider each of the cases of Proposition~\ref{prop:Class}.
If $G \cong \Ga$, then we can take $\pi = F$. In cases (2) and (3),
$\EE \cong \ZZ$, with the $p$-adic valuation, and in case (4), the
valuation restricts to the $p$-adic valuation on the subring of
integers, and so in these case, we can take $\pi = p$.

In case (5), where $G$ is a supersingular elliptic curve, $v(p) = 2$,
and we have to find $\pi \in \EE \setminus \ZZ$. By
\cite[Thm.~42.1.9]{Voight18}, $Q$ is ramified at $p$, which means that $Q
\otimes_{\QQ} \QQ_p$ is a division algebra over $\QQ_p$, the field of
$p$-adics. Therefore, by \cite[Thm.~13.3.10(c)]{Voight18}, there is an
element $\varphi \in Q \otimes_{\QQ} \QQ_p$ such that $N(\varphi)$ has $p$-adic
valuation 1. Since $Q \otimes_{\QQ} \QQ_p$ is the $p$-adic completion of
$Q$, then $\varphi$ can be approximated by an element $\varphi' \in Q$ with the
same valuation. We can write $\varphi' = a^{-1} \psi$, where $a \in \ZZ$ and
$\psi \in \EE$. Therefore, $N(\psi) = a^2 N(\varphi')$ has odd $p$-adic
valuation, say $2k+1$, so we can write $\psi = \psi' \circ F^{2k+1}$,
where $\psi'\colon G^{(p^{2k+1})} \rightarrow G$ is
separable. Since $G$ is defined over $\FF_{p^2}$ \cite[Thm.
V.3.1(a)]{Silverman09}, $G^{(p^{2k+1})} \cong G^{(p)}$, and so we take
$\pi = \psi' \circ F$.
\end{proof}

\subsection{The derivative homomorphism} \label{ssec:DerHom}
An element $\alpha \in \EE$ is, by definition, an algebraic group
homomorphism $G \to G$. In particular, it maps $0$ to $0$, and the
derivative $d_0 \alpha$ is a linear map from the tangent space $T_0 G$
into itself. Since $T_0 G$ is a one-dimensional vector space over $K$,
we can identify $d_0 \alpha$ with a scalar in $K$.
Concluding, we have a map
\[ \ell:\EE \to K,\quad \alpha \mapsto d_0 \alpha. \]
In the case of an elliptic curve, this is the same map as constructed
in \cite[Corollary III.5.6]{Silverman09}.

\begin{lm}
The map $\ell$ is a (unitary) ring homomorphism, $\im \ell$
is a subfield of $K$, and $\ker \ell$ is the ideal $\{\phi \in \EE \mid
v(\phi) > 0\}$.
\end{lm}

\begin{proof}
First, the multiplicative neutral element of $\EE$ is the identity $G
\to G$, whose derivative is the scalar multiplication $T_0 G \to T_0 G$
by $1 \in K$. Next, multiplicativity of $\ell$ follows from the chain rule:
\[ \ell(\alpha \beta)=d_0(\alpha \circ \beta)=(d_0 \alpha) \circ (d_0 \beta)=
\ell(\alpha)\ell(\beta). \]
For additivity, we recall that for any algebraic group $G$ with neutral
element $e$ the derivative of the group operation $m \colon G \times G
\to G$ at $(e,e)$ is the addition map $T_e G \times T_e G \to T_e G$. So
in our setting where $e=0$,
\[ \ell(\alpha + \beta)=d_0(\alpha+\beta)=d_0(m \circ
(\alpha,\beta))=d_0\alpha+d_0\beta=\ell(\alpha)+\ell(\beta),
\]
where we used the chain rule once more in the third equality.

To show that $\im \ell$ is a field, we use the classification of
one-dimensional groups $G$. If $G$ is $\Ga$, then $\EE$ is the ring of
$p$-polynomials $K[F]$, and $\ell(F) = 0$, so $\im \ell = K[F] / K[F]F$
is isomorphic to $K$. In the other cases, when $G$ is $\Gm$ or an
elliptic curve, $\EE$ is a finitely generated $\ZZ$-algebra. Since $\im
\ell$ is a subring of a field of characteristic $p$, it must be a
finitely generated $\FF_p$-algebra, and also an integral domain, thus it
is a field.

For the last statement, if $\alpha \in \EE$ has positive valuation, then
it is inseparable, so its derivative vanishes. Conversely, let $\alpha
\in \EE$ with $\ell(\alpha)=d_0\alpha=0$. For $h \in G$ let $a_h \colon
G \to G$ be the morphism $g \mapsto g+h$. Then $\alpha \circ a_h =
a_{\alpha(h)} \circ \alpha$ and therefore \[ (d_h \alpha) \circ (d_0
a_h) = d_0 (\alpha \circ a_h) = d_0(a_{\alpha(h)} \circ \alpha) =
d_0(a_{\alpha(h)}) \circ 0 = 0 \] and using that $d_0 a_h$ is invertible
with inverse $d_h a_{-h}$ we find that $d_h \alpha=0$. So $\alpha$ is
inseparable, so it has positive valuation.
\end{proof}

\begin{re}
The first statement in the lemma also holds when $\cha K=0$. Then the
non-existence of inseparable morphisms implies immediately that $\EE$
embeds into $K$ and is, in particular, a commutative ring.
\end{re}

Let $R \subseteq Q$ be the valuation ring. For what follows, we need
to extend $\ell$ from $\EE$ to $R$. Let $\alpha,\beta \in \EE$ such that $\alpha
\beta^{-1} \in R$. By \cite[Corollary II.2.12]{Silverman09}, we can
write $\beta=\beta' \circ F^e$ for some $e \in \ZZo$, where $F^e:G
\to G^{(p^e)}$ is the $e$-th power of Frobenius and $\beta': G^{(p^e)}
\to G$ is separable. It follows that $v(\beta)=e$. Similarly, write
$\alpha=\alpha'' \circ F^d$ with $\alpha'': G^{(p^d)} \to G$ separable and
hence $v(\alpha)=d$. Since $\alpha \beta^{-1} \in R$, we have $d \geq
e$, and we set $\alpha':=\alpha'' \circ F^{d-e}$. Then $\alpha',\beta'$
are both morphisms $G^{(p^e)} \to G$ and $\beta'$ is separable. This
implies that $d_0 \beta':T_0 G^{(p^e)} \to T_0 G$ is multiplication by a
nonzero scalar, and so an isomorphism. Then
$(d_0 \alpha') (d_0 \beta')^{-1}$ is a linear map $T_0 G \to T_0 G$,
hence multiplication by a scalar, which we denote by $\ell(\alpha
\beta^{-1})$.

\begin{lm} \label{lm:ellextends}
The above is a well-defined extension of $\ell\colon \EE \to K$ to a
ring homomorphism $\ell\colon R \to K$ whose image is a field and whose
kernel is the set of elements of positive valuation.
\end{lm}

\begin{proof}
First, taking $\beta$ equal to the identity, the above reduces to the
earlier definition of $\ell\colon \EE \to K$. Second, let $\gamma \in \EE
\setminus \{0\}$ and set $\alpha_1:=\alpha \gamma$ and $\beta_1:=\beta
\gamma$, so that $\alpha_1 \beta_1^{-1} = \alpha \beta^{-1}$. Write
$\gamma=\gamma' F^c$ with $c \in \ZZo$ and $\gamma':G^{(p^c)} \to G$
separable. Then, with notation as above,
\[ \beta_1=(\beta' F^e) (\gamma' F^c) = \beta' (F^e \gamma' F^{-e})
F^{e+c} = \beta' \gamma'' F^{e+c} \]
where $\gamma''=F^e \gamma' F^{-e}$ is a separable morphism $G^{(p^{e+c}))}
\to G^{(p^e)}$. Similarly, we have $\alpha_1=\alpha' \gamma'' F^{e+c}$.
The definition of $\ell(\alpha_1 \beta_1^{-1})$ reads
\[ (d_0 \alpha' \gamma'') (d_0(\beta' \gamma''))^{-1} = (d_0 \alpha')
(d_0 \gamma'') (d_0 \gamma'')^{-1} (d_0 \beta')^{-1} = (d_0 \alpha')
(d_0 \beta')^{-1}. \]
This shows that the definitions of $\ell$ on $\alpha_1 \beta_1^{-1}$
and
on $\alpha \beta^{-1}$ agree. More generally, $\alpha
\beta^{-1}=\alpha_1
\beta_1^{-1}$ holds if and only if there exist nonzero $\gamma,\delta
\in \EE$
such that $\alpha \gamma=\alpha_1 \delta$ and $\beta \gamma=\beta_1
\delta$, and applying the above twice we find that $\ell\colon R \to K$
is well-defined.

Second, to show that $\ell$ is multiplicative, let $r,r_1 \in R$. If
$v(r)>0$
or $v(r_1)>0$, then $v(rr_1)>0$ and $\ell(rr_1)=0=\ell(r)\ell(r_1)$.
So we may assume that $v(r)=v(r_1)=0$. We may also write $r$ and $r_1$
with a common denominator: $r=\alpha \beta^{-1}, r_1=\alpha_1
\beta^{-1}$
where $v(\alpha)=v(\alpha_1)=v(\beta)=:e$. Now find $\gamma, \gamma_1
\in
\EE \setminus \{0\}$ such that $\beta \gamma=\alpha_1 \gamma_1$, so
that
\[ s:=(\alpha \beta^{-1})(\alpha_1 \beta^{-1})=(\alpha \gamma)(\beta
\gamma_1)^{-1} \]
and also $v(\gamma)=v(\gamma_1)=:c$. Write $\gamma=\gamma' F^c$ and
$\gamma_1=\gamma_1' F^c$ and $\alpha=\alpha' F^e$ and
$\alpha_1=\alpha_1'
F^e$ and $\beta=\beta' F^e$ with $\gamma',\gamma_1':G^{(p^c)} \to G$
and $\alpha',\alpha_1',\beta':G^{(p^e)} \to G$ separable. Then we have,
for
the denominator of $s$,
\[ \beta \gamma_1=\beta' F^e \gamma_1' F^c = \beta' \gamma_1'' F^{e+c}
\]
where $\gamma_1''=F^e \gamma_1' F^{-e}:G^{(p^{e+c})} \to G^{(p^e)}$ is
separable.  Similarly, for the numerator of $s$,
\[ \alpha \gamma=\alpha' \gamma'' F^{e+c} \]
where $\gamma''=F^e \gamma' F^{-e}:G^{(p^{e+c})} \to G^{(p^e)}$ is
separable.
Now, by definition,
\begin{equation} \label{eq:ells}
\ell(s)=(d_0 \alpha' \gamma'') (d_0 \beta' \gamma_1'')^{-1}
=(d_0 \alpha') (d_0 \gamma'') (d_0 \gamma_1'')^{-1} (d_0 \beta')^{-1}
\end{equation}
On the other hand, by a similar computation, the relation $\beta
\gamma=\alpha_1 \gamma_1$ implies $\beta' \gamma'' = \alpha_1
\gamma_1''$, so that
\[ (d_0 \beta') (d_0 \gamma'')=(d_0 \alpha_1') (d_0 \gamma_1''). \]
Writing this as $d_0 \gamma''=(d_0 \beta')^{-1} (d_0 \alpha_1') (d_0
\gamma_1'')$ and substituting in~\eqref{eq:ells} yields
\[ \ell(s)=(d_0 \alpha') (d_0 \beta')^{-1} (d_0 \alpha_1') (d_0
\beta')^{-1}=\ell(r) \ell(r_1), \]
as desired.

Third, for additivity of $\ell$ we compute, still assuming $r=\alpha
\beta^{-1},r_1=\alpha_1 \beta^{-1} \in R$ and notation as above, but
no longer requiring $v(r)=v(r_1)=0$,
\[ 
\ell(r+r_1)=\ell((\alpha+\alpha_1) \beta^{-1})
=d_0(\alpha'+\alpha_1') (d_0 \beta)^{-1}
=(d_0 \alpha' + d_0 \alpha_1') (d_0 \beta)^{-1}= \ell(r) + \ell(r_1). 
\]
Finally, $\ker \ell=\{r \in R \mid v(r)>0\}$ follows directly from the
definition. Since every element of $R$ not in this ideal is invertible
in $R$, $\im \ell$ is a field.
\end{proof}

\subsection{The Lie algebra of a subgroup}

We return to the division ring $Q$ generated by the endomorphism ring
$\EE$ of a connected, one-dimensional algebraic group over $K$, equipped
with the valuation from \S \ref{ssec:Valuation}. Note that the residue
field $L$ here is commutative, since by Lemma~\ref{lm:ellextends}, $L$
embeds into the ground field $K$ of our algebraic group $G$. We write
$\ell$ for the map $R^n \to K^n$ defined by applying $\ell$
component-wise. To prove Theorem~\ref{thm:Flock} we need a description
of the Lie algebra of a closed, connected subgroup $X \subseteq G^n$ in
terms of the right vector space representing it.

\begin{lm} \label{lm:LieAlgebra}
Let $X \subseteq G^n$ be a closed, connected subgroup, let $N =P^{-1}(X)
\subseteq \EE^n$ be the saturated right module representing it, and $NQ$
the right subspace of $Q^n$ generated by $Q$. Let $v$ be any vector
spanning the one-dimensional space $T_0 G$.  Then we have
\[ T_0 X=\langle \ell(NQ \cap R^n)v \rangle_K. \]
\end{lm}

\begin{proof}
First, $\dim_K T_0 X=\dim X=\dim_Q NQ$ by Lemma~\ref{lm:P}. On the
other hand, by Lemma~\ref{lm:Reduction} and the fact that $\ell$ is
just the reduction map followed by an embedding $L \to K$, $\dim_Q
NQ=\dim_K \ell(NQ \cap R^n)$. So the two spaces in the lemma have the
same dimension. It therefore suffices to prove that the right-hand side
is contained in the left-hand side.  Since any finite set of elements in 
$Q$ can be given common denominators, a general element of $NQ \cap
R^n$ is of the form $\upsi \beta^{-1}$ with $\beta \in \EE\setminus
\{0\}$ and $\upsi \in N$. Write $\beta=\beta' F^e$ with $e \in \ZZo$
and $\beta':G^{(p^e)} \to G$ separable, and write $\psi_i=\psi_i' F^e$
where $\psi_i':G^{(p^e)} \to G$ is a not necessarily separable morphism.
From $\upsi \in N$ and $\upsi=\upsi' \circ F^e$ and the fact that $F^e:G
\to G^{(p^e)}$ is surjective, it follows that $\upsi'$ maps $G^{(p^e)}$
into $X$. Hence $d_0 \upsi'$ maps $T_0 G^{(p^e)}$ into $T_0 X$.
On the other hand, by definition of $\ell$ we have 
\[ \ell(\upsi \beta^{-1})=(d_0 \upsi') (d_0 \beta')^{-1}, \]
which, therefore, is a linear map $T_0 G \to T_0 X$, as desired.
\end{proof}

\subsection{Frobenius flocks of subgroups}
\label{ssec:Flock}

The last ingredient for Theorem~\ref{thm:Flock} is the notion of Frobenius flock
of a $d$-dimensional, closed, connected subgroup $X$ of $G^n$. By
Lemma~\ref{lm:ValOne} we can choose a uniformizer $\pi$ of $Q$ which is
in fact an element of $\EE$. For $\alpha \in \ZZo^n$ and $q \in G^n$
we write $\pi^\alpha q:=(\pi^{\alpha_1}(q),\ldots,\pi^{\alpha_n}(q))
\in G^n$. This determines an action of the additive monoid $\ZZo^n$
on $G^n$, and $\pi^\alpha X$ is a closed, connected subgroup of $G^n$
for all $\alpha \in \ZZo^n$ of dimension $d$.

We extend the definition of $\pi^\alpha X$ to $\alpha \in \ZZ^n$
as follows. Write $\alpha=\beta - k (1,\ldots,1)$ where $\beta
\in \ZZo^n$ and $k \in \ZZo$. Then we let $\pi^\alpha X$ be the
connected component of $0$ of the preimage of $\pi^\beta X$ under
the homomorphism $\pi^{k(1,\ldots,1)}:G^n \to G^n$. This preimage is
clearly contained in the preimage of $\pi^{\beta+(1,\ldots,1)}X$ under
$\pi^{(k+1)(1,\ldots,1)}$ and hence, since both preimages have dimension
$d$, the connected components of $0$ coincide, so $\pi^\alpha X$ is
independent of the choice of $\beta$ as above. One readily checks that
we have thus defined an action of $\ZZ^n$ on the set of $d$-dimensional
closed, connected subgroups of $G^n$.

\begin{de}
The {\em Frobenius flock} of $X \subseteq G^n$, relative to $\pi$, is
the collection of vector spaces $(U_\alpha)_{\alpha \in \ZZ^n}$ defined
by $U_\alpha:=T_0 (\pi^{-\alpha} X) \subseteq (T_0G)^n$.
\end{de}

In the case where $G=\Ga$ and $\pi=F$, this notion coincides with
the Frobenius flocks of \cite{Bollen17}.  There, $X$ was allowed to
be an arbitrary irreducible closed subset of $K^n$, and consequently
the base point at which we took the tangent spaces had to be chosen somewhat
carefully. In our current setting, where $X$ is a subgroup, all points
lead to equivalent Frobenius flocks, which is why we chose $0$ as the
base point. Also, in order to work with elliptic curves for which
Frobenius is not an endomorphism, we allow $\pi$ to be any endomorphism
of valuation 1.

\subsection{Proof of Theorem~\ref{thm:Flock}.} \label{ssec:ProofFlock}

The next proposition says that the Frobenius flock of $X$ equals
the linear flock of the corresponding right $Q$-subspace of $Q^n$,
up to a natural identification. Recall that the residue field $L$
of $Q$ is a subfield of $K$ via the homomorphism $\ell$ from
\S \ref{ssec:DerHom}. 

\begin{prop}
Let $X \subseteq G^n$ be a closed, connected subgroup and $N:=P^{-1}(X)
\subseteq \EE^n$ the corresponding right module. Let $v \in T_0 G
\setminus \{0\}$. Let $(V_\alpha)_\alpha$ be the $L$-linear flock of
$NQ$ and let $(U_\alpha)_\alpha$ be the Frobenius flock of $X$.  Then the
map $L^n \to (T_0 G)^n, c=(c_1,\ldots,c_n) \mapsto (\ell(c_1)
v,\ldots, \ell(c_n) v)=\ell(c)v$ induces a linear bijection 
$(K \otimes_L V_\alpha) \to U_\alpha$ for each $\alpha \in \ZZ^n$.
\end{prop}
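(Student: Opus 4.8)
The plan is to deduce the proposition from Lemma~\ref{lm:LieAlgebra}, applied not to $X$ itself but to each translated subgroup $\pi^{-\alpha}X$. For $\gamma\in\ZZ^n$ let $\pi^\gamma$ also denote the diagonal matrix $\mathrm{diag}(\pi^{\gamma_1},\dots,\pi^{\gamma_n})$ acting on $Q^n$ from the left; this makes sense for arbitrary $\gamma$, since $\pi$ is a unit in $Q$. The crucial intermediate claim is that scaling of subgroups matches scaling of their right $Q$-subspaces:
\[
P^{-1}\bigl(\pi^{\gamma}X\bigr)\,Q \;=\; \pi^{\gamma}\bigl(NQ\bigr)\qquad\text{for all }\gamma\in\ZZ^n .
\]
Granting this with $\gamma=-\alpha$, set $Y:=\pi^{-\alpha}X$, so that $U_\alpha=T_0Y$ and the saturated module $N_Y:=P^{-1}(Y)$ satisfies $N_YQ=\pi^{-\alpha}(NQ)$; consequently $V_\alpha=\overline{(\pi^{-\alpha}(NQ))\cap R^n}=\overline{(N_YQ)\cap R^n}$. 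Lemma~\ref{lm:LieAlgebra} applied to $Y$ then gives $U_\alpha=T_0Y=\bigl\langle\,\ell(N_YQ\cap R^n)\,v\,\bigr\rangle_K$, and since (as in the proof of that lemma) $\ell\colon R^n\to K^n$ is the reduction $R^n\to L^n$ followed by the embedding $L\hookrightarrow K$, this reads $U_\alpha=\bigl\langle\,\ell(c)v : c\in V_\alpha\,\bigr\rangle_K$.

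To establish the displayed identity I would reduce to $\gamma=\pm e_i$, using that $\gamma\mapsto\pi^{\gamma}X$ is an action of $\ZZ^n$ (recorded in \S\ref{ssec:Flock}) and that $\pi^{\gamma_1}\pi^{\gamma_2}=\pi^{\gamma_1+\gamma_2}$ as diagonal matrices. For $\gamma=e_i$ one has $\pi^{e_i}X=\pi^{e_i}(X)$, the image under the endomorphism $\pi^{e_i}$ of $G^n$; composing a parametrization $\Psi$ of $X$ with $\pi^{e_i}$ gives a parametrization of $\pi^{e_i}X$ whose columns generate $\{(\phi_1,\dots,\pi\phi_i,\dots,\phi_n):\uphi\in N\}$, whose $Q$-span is exactly $\pi^{e_i}(NQ)$. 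For $\gamma=-e_i$, put $Y:=\pi^{-e_i}X$; then $\pi^{e_i}(Y)=X$ by the action property, so $Y\subseteq(\pi^{e_i})^{-1}(X)$. Passing to equations: for $\uphi\in E(X)$ and $y\in Y$ the relation $\pi^{e_i}(y)\in X$ yields $\sum_{j\ne i}\phi_j(y_j)+(\phi_i\pi)(y_i)=0$, hence $(\phi_1,\dots,\phi_i\pi,\dots,\phi_n)\in E(Y)$. Both $\{(\phi_1,\dots,\phi_i\pi,\dots,\phi_n):\uphi\in E(X)\}$ and $E(Y)$ have rank $n-d$ (the former because right multiplication by $\pi$ is injective on $\EE$, the latter because $\dim Y=d$), so their $Q$-spans coincide; that is, $\{(u_1,\dots,u_i\pi,\dots,u_n):\underline u\in E(X)Q\}=E(Y)Q$. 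Taking orthogonal complements, using $P^{-1}(Y)=E(Y)^\perp$ (so $N_YQ=(E(Y)Q)^\perp$) together with the elementary identity $\{(u_1,\dots,u_i\pi,\dots,u_n):\underline u\in U\}^\perp=\{(w_1,\dots,\pi^{-1}w_i,\dots,w_n):\underline w\in U^\perp\}$ for a left $Q$-subspace $U\subseteq Q^n$, gives $N_YQ=\pi^{-e_i}(NQ)$, as wanted. I expect this $\gamma=-e_i$ step to be the main obstacle: $\pi^{-e_i}X$ is the identity component of a preimage under an \emph{inseparable} endomorphism---indeed $d_0(\pi^{e_i})$ kills the $i$-th coordinate, since $\ell(\pi)=0$---so one cannot argue on tangent spaces directly; routing through $E(Y)$ avoids this at the cost of careful rank bookkeeping and attention to which scalings are left- and which are right-multiplications by $\pi$.

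It remains to upgrade $U_\alpha=\langle\ell(c)v:c\in V_\alpha\rangle_K$ to the claimed bijection. The map $L^n\to(T_0G)^n$, $c\mapsto\ell(c)v$, is additive and satisfies $\ell(\lambda c)v=\ell(\lambda)\,(\ell(c)v)$ for $\lambda\in L$, i.e.\ it is semilinear over the field embedding $\ell|_L\colon L\hookrightarrow K$; hence it factors through a $K$-linear map $K\otimes_L V_\alpha\to(T_0G)^n$ whose image is the $K$-span of $\{\ell(c)v:c\in V_\alpha\}$, namely $U_\alpha$. Thus the induced map is onto $U_\alpha$. Finally $\dim_K(K\otimes_LV_\alpha)=\dim_LV_\alpha$, which is independent of $\alpha$ since $(V_\alpha)_\alpha$ is a flock, and at $\alpha=0$ equals $\dim_QNQ=\rk N=\dim X=d$ by Lemmas~\ref{lm:Reduction} and~\ref{lm:P}; as $\dim_KU_\alpha=\dim(\pi^{-\alpha}X)=d$ as well, the surjection $K\otimes_LV_\alpha\to U_\alpha$ is a bijection.
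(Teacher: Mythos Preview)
Your proposal is correct and follows the same route as the paper: set $Y=\pi^{-\alpha}X$, show $P^{-1}(Y)Q=\pi^{-\alpha}(NQ)$, apply Lemma~\ref{lm:LieAlgebra} to $Y$, and finish by a dimension count; the paper compresses the middle step into ``a straightforward computation'' and the last into a single chain of equalities, whereas you spell both out (the reduction to $\gamma=\pm e_i$ and the passage through $E(Y)$ for the inseparable direction are exactly the kind of verification the paper elides). One small notational slip: since $E(X)$ is a \emph{left} module, its $Q$-span should be written $QE(X)$ rather than $E(X)Q$ in your argument.
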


\begin{proof}
Let $\alpha \in \ZZ^n$ and set $Y:=\pi^{-\alpha} X$. A straightforward
computation shows that $P^{-1}(Y)Q=\pi^{-\alpha}NQ$. By
Lemma~\ref{lm:LieAlgebra} applied to $Y$, we therefore have
\[ U_\alpha = T_0 Y=\langle \ell((\pi^{-\alpha}NQ) \cap R^n)v \rangle_K
= (K \otimes_L \overline{(\pi^{-\alpha} NQ) \cap R^n}) v=(K \otimes_L
V_\alpha)v, \]
as desired.
\end{proof}

In what follows, we decompose $\pi=\psi \circ F$ for some separable
homomorphism $\psi\colon G^{(p)} \to G$ and $F$ the Frobenius map. Let
$h\colon G \dto K$ be a rational function defined near $0$ with $h(0)=0$
and such that $d_0 h\colon T_0 G \to T_0 K=K$ is nonzero, hence an
isomorphism. Let $h^{(p)}\colon G^{(p)} \dto K$ be the Frobenius twist
of $h$, i.e., the rational map making the diagram on the left commute:

\begin{center}
\includegraphics{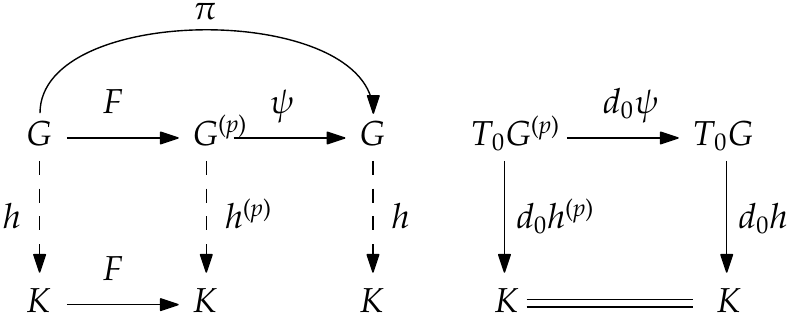}
\end{center}

We want the diagram on the right, at the level of tangent spaces, to
commute as well. {\em A priori}, $d_0 h^{(p)}=c d_0 h \circ d_0 \psi$
for some constant $c \in K^*$. Multiplying $h$ by a scalar $a \in K$,
the differential $d_0 h^{(p)}$ on the left side of the equation is
multiplied by $a^p$ and, on the right, $d_0 h$ is
multiplied by $a$. Hence if we choose $a$ such that $a^{p-1}=1/c$,
then we have the desired equality:
\[ d_0 h \circ d_0 \psi=d_0 h^{(p)}. \]
The mild conditions alluded to in Theorem~\ref{thm:Flock} are
as follows:
\begin{equation} \tag{**}
\text{\em $h:G \dto K$ is defined near $0$, $d_0 h \neq 0$, and
$d_0 h \circ d_0 \psi = d_0 h^{(p)}$.}
\end{equation}
Applying the Frobenius twist to both sides, we obtain 
\[ d_0 h^{(p)} \circ d_0 \psi^{(p)}=d_0 h^{(p^2)} \]
where $\psi^{(p)}:G^{(p^2)} \to G^{(p)}$. Combining the two formulas
yields
\[ d_0 h \circ d_0 \psi \circ d_0 \psi^{(p)} = d_0 h^{(p^2)}.
\]
We abbreviate $\psi \circ \psi^{(p)}$ to $\psi^2$. Then the
above reads 
\[ d_0 h \circ d_0 \psi^2 = d_0 h^{(p^2)}. \]
More generally, for each nonnegative integer $k$, writing 
$\psi^k:=\psi \circ \psi^{(p)} \circ \cdots \circ \psi^{(p^{k-1})}$, we have 
\[ d_0 h \circ d_0 \psi^k=d_0 h^{(p^k)}. \]
Extending this component-wise to tuples we have 
\[ d_0 h^n \circ d_0 \psi^\alpha = d_0 h^{(p^\alpha)} \text{ for all }
\alpha \in \ZZo^n. \]

\begin{prop} \label{prop:FlockBDP}
Set $Y:=\overline{h^n(X)}$ and assume that $Y^{(p^\alpha)}$ is smooth
at $0$ for all $\alpha \in \ZZ^n$.  Let $(U_\alpha)_\alpha$ be the
Frobenius flock of $X$, and for $\alpha \in \ZZ^n$ set $W_\alpha:=T_0
Y^{(p^\alpha)}$. Then the matroid on $[n]$ defined by $W_\alpha$ equals
the matroid defined by $T_{F^\alpha y} Y^{(p^\alpha)}$ for $y$ a general
point in $Y$, and the Frobenius flock $(W_\alpha)_\alpha$
of $Y$ at $0$ is the image of $(U_\alpha)_\alpha$ under the linear
isomorphism $(d_0 h)^n: T_0 G^n \to T_0 K^n$.
\end{prop}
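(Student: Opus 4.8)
Fix a nonzero $v \in T_0 G$ and use it, together with the basis $1$ of $T_0 K$, to identify $(T_0 G)^n$ with $K^n$; then $(d_0 h)^n$ becomes multiplication by a nonzero scalar, so it fixes every linear subspace of $K^n$ and preserves the matroid a subspace determines. Hence it is enough to prove, for each $\alpha \in \ZZ^n$, that $W_\alpha = (d_0 h)^n(U_\alpha)$ and that the matroid of $U_\alpha$ equals the matroid of $T_{F^\alpha y}Y^{(p^\alpha)}$ for general $y \in Y$. I would settle the case $\alpha \in \ZZo^n$ first and reduce the general case to it at the end.

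The crux, for $\alpha \in \ZZo^n$, is to find a rational parametrization of $Y^{(p^\alpha)}$ by the group $\pi^{-\alpha}X$ that is \'etale at $0$, so that it computes $T_0 Y^{(p^\alpha)}$. The naive map $x \mapsto (h(x_i)^{p^{-\alpha_i}})_i$ from $X$ is not a morphism, and $h^n \circ \pi^\alpha$ has zero differential at $0$, so neither is of use. Instead I would invoke the factorization $\pi^\alpha = \psi^\alpha \circ F^\alpha$ with $\psi^\alpha$ separable: setting $\eta_i := h \circ \psi^{\alpha_i}$, the function $\eta_i \circ F^{\alpha_i} \in K(G)$ is a $p^{\alpha_i}$-th power, since the relative Frobenius $F^{\alpha_i}\colon G \to G^{(p^{\alpha_i})}$ pulls $K(G^{(p^{\alpha_i})})$ back onto $K(G)^{p^{\alpha_i}}$; let $\Phi_{\alpha,i}$ be its unique $p^{\alpha_i}$-th root, which a computation on coordinate rings identifies with the Frobenius twist $\eta_i^{(p^{-\alpha_i})}$. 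Then $\Phi_\alpha := (\Phi_{\alpha,i})_i$ is a rational map defined near $0$ with $\Phi_\alpha(0)=0$ and $\Phi_{\alpha,i}(q_i)^{p^{\alpha_i}} = h(\pi^{\alpha_i}q_i)$ for all $i$; since $\pi^\alpha(\pi^{-\alpha}X)=X$ and the coordinatewise $p^{\alpha_i}$-th power is a homeomorphism of $K^n$ identifying $Y^{(p^\alpha)}$ with $Y$ and $F^\alpha y$ with $y$, one gets $\overline{\Phi_\alpha(\pi^{-\alpha}X)} = Y^{(p^\alpha)}$. Using that the differential of a twist is the twist of the differential together with the iterated identity $d_0 h \circ d_0 \psi^{\alpha_i} = d_0 h^{(p^{\alpha_i})}$ supplied by (**), I would compute $d_0 \Phi_{\alpha,i} = (d_0 h^{(p^{\alpha_i})})^{(p^{-\alpha_i})} = d_0 h$, hence $d_0 \Phi_\alpha = (d_0 h)^n$. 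On $U_\alpha$ this map is injective, so $(d_0 h)^n(U_\alpha)$ has dimension $d = \dim Y^{(p^\alpha)} = \dim_K T_0 Y^{(p^\alpha)}$ (the last equality by the smoothness hypothesis; $\dim Y = d$ because $h^n$ is \'etale near $0$), and as $(d_0 h)^n(U_\alpha) \subseteq T_0 Y^{(p^\alpha)}$ equality holds: $W_\alpha = (d_0 h)^n(U_\alpha)$, and $\Phi_\alpha|_{\pi^{-\alpha}X}$ is \'etale at $0$. For general $\alpha \in \ZZ^n$ I would use that $(U_\alpha)_\alpha$ is an $(L,\phi^{-1})$-linear flock (it is the linear flock of $NQ$, by the preceding proposition and Lemma~\ref{lm:Flock}) and that a direct Jacobian computation gives $W_{\alpha + (1,\ldots,1)} = F^{-1}(W_\alpha)$ (using that $Y^{(p^{\alpha+(1,\ldots,1)})}$ is the coordinatewise $p$-th root of $Y^{(p^\alpha)}$ and that both are smooth at $0$); iterating this relation until $\alpha + k(1,\ldots,1)\in\ZZo^n$, and using that $(d_0 h)^n$ commutes on subspaces with the coordinatewise $p^k$-th power, then transports the identity to all $\alpha$.

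For the matroid statement I would exploit the homogeneity of $\pi^{-\alpha}X$. Its \'etale locus $\Omega$ under $\Phi_\alpha$ is dense open and contains $0$; each $\Phi_{\alpha,i}\colon G \dto K$ is non-constant (its differential at $0$ is $d_0 h \neq 0$), hence unramified away from a finite set $S_i \not\ni 0$. For $q$ in a suitable dense open subset of $\pi^{-\alpha}X$ one then has $q \in \Omega$ and, for each $i$: $q_i \notin S_i$ when $i$ is a non-loop of the matroid of $U_\alpha$ (the $i$-th coordinate projection of the group $\pi^{-\alpha}X$ is then onto $G$), and $q_i = 0$ when $i$ is a loop. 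For such $q$, translation $a_q\colon x \mapsto x+q$ is an automorphism of $\pi^{-\alpha}X$, so $T_q(\pi^{-\alpha}X) = d_0 a_q(U_\alpha)$ and $T_{\Phi_\alpha(q)}Y^{(p^\alpha)} = d_q\Phi_\alpha\big(T_q(\pi^{-\alpha}X)\big) = d_0(\Phi_\alpha \circ a_q)(U_\alpha)$. Since $\Phi_\alpha \circ a_q$ acts coordinatewise, $d_0(\Phi_\alpha \circ a_q)$ is a diagonal linear map whose $i$-th entry is $d_{q_i}\Phi_{\alpha,i}\cdot d_0 a_{q_i}$, nonzero for every $i$ by the choice of $q$; being invertible diagonal, it carries $U_\alpha$ to a subspace with the same matroid. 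Finally, $\Phi_\alpha(q)$ ranges over a dense subset of $Y^{(p^\alpha)}$ as $q$ varies, and so does $F^\alpha y$ as $y$ varies over $Y$, so $T_{F^\alpha y}Y^{(p^\alpha)}$ for general $y$ computes the generic tangent matroid of $Y^{(p^\alpha)}$, which we have just identified with the matroid of $U_\alpha$, hence with that of $W_\alpha$.

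The step I expect to be hardest is the construction and differential of $\Phi_\alpha$: replacing the non-morphism ``$p^{-\alpha}$-th root of $h^n \circ \pi^\alpha$'' by a genuine rational map \'etale at $0$, and tracking the Frobenius twists precisely enough to land on $d_0 \Phi_\alpha = (d_0 h)^n$ exactly, rather than up to a diagonal rescaling. (The weaker conclusion already suffices for the matroid statement, so (**) enters only to pin down the flock on the nose.) The remaining ingredients --- the Jacobian identities behind the flock axioms and the genericity argument --- I expect to be routine.
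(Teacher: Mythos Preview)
Your plan and the paper's proof share the same skeleton: reduce to the positive orthant, factor $\pi^\alpha = \psi^\alpha \circ F^\alpha$ with $\psi^\alpha$ separable, use the iterated identity from (**) to pin the differential down as $(d_0 h)^n$, and then invoke homogeneity of the group (translation $a_q$) to get the matroid statement via an invertible diagonal linear map. The one substantive difference is precisely the step you flag as hardest. The paper never builds your rational map $\Phi_\alpha$; instead it routes through the Frobenius twist $X^{(p^\alpha)}$ as an intermediary, using the two morphisms already available --- the separable $\psi^\alpha\colon X^{(p^\alpha)} \to \pi^\alpha X$ (so $d_0\psi^\alpha$ is invertible) and the twisted $h^{(p^\alpha)}\colon X^{(p^\alpha)} \dto Y^{(p^\alpha)}$ --- and simply sets $A := d_0 h^{(p^\alpha)} \circ (d_0 \psi^\alpha)^{-1}$ as a linear map on tangent spaces. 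Then the iterated (**) in the form $d_0 h^n \circ d_0 \psi^\alpha = d_0 h^{(p^\alpha)}$ yields $A = (d_0 h)^n$ in one line. This sidesteps the $p^{\alpha}$-th root extraction and the twist-chasing on $d_0\Phi_\alpha$ entirely: the linear isomorphism is assembled from existing morphisms rather than manufactured as the differential of a new rational map. Your route is correct, but the paper's zigzag is shorter and makes the role of (**) more transparent. One bookkeeping caution: with the paper's convention $F^\alpha\colon Y \to Y^{(p^\alpha)}$, your $\Phi_\alpha$ actually sends $\pi^{-\alpha}X$ into the preimage $Y^{(p^{-\alpha})}$, not $Y^{(p^\alpha)}$; the paper correspondingly works with $\pi^\alpha X$ on the group side. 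This is harmless once the signs are aligned, and the paper's own indexing is not entirely consistent here either.
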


\begin{proof}
Since Frobenius flocks are uniquely determined by their restriction
to the positive orthant, it suffices to prove both statements for
$\alpha \in \ZZo^n$. For the last statement, consider the following
two diagrams; here we have left out the obvious solid arrows between
$X,X^{(p^\alpha)},\pi^\alpha X$ and between $Y,Y^{(p^\alpha)}$ as well
as the dashed arrows $h^n: X \dto Y$ and $h^{(p^\alpha)}: X^{(p^\alpha)}
\dto Y^{(p^\alpha)}$.

\begin{center}
\includegraphics{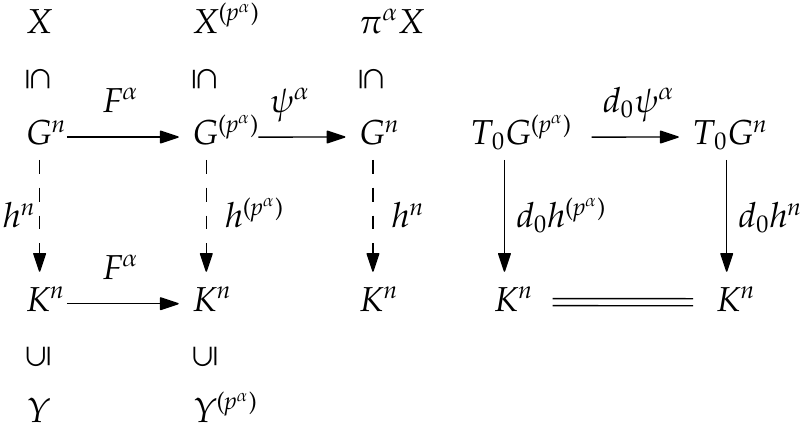}
\end{center}

The left-most diagram commutes by definition, and the right-most
diagram commutes by the discussion above. So, by the left-most diagram,
$A:=d_0 h^{(p^\alpha)} \circ (d_0 \psi^\alpha)^{-1}$ maps $U_\alpha=T_0
\pi^\alpha X$ into $W_\alpha=T_0 Y^{(p^\alpha)}$, and by the
right-most
diagram $A$ equals $d_0 h^n$. This proves the last
statement.

\begin{figure}
\begin{center}
\includegraphics{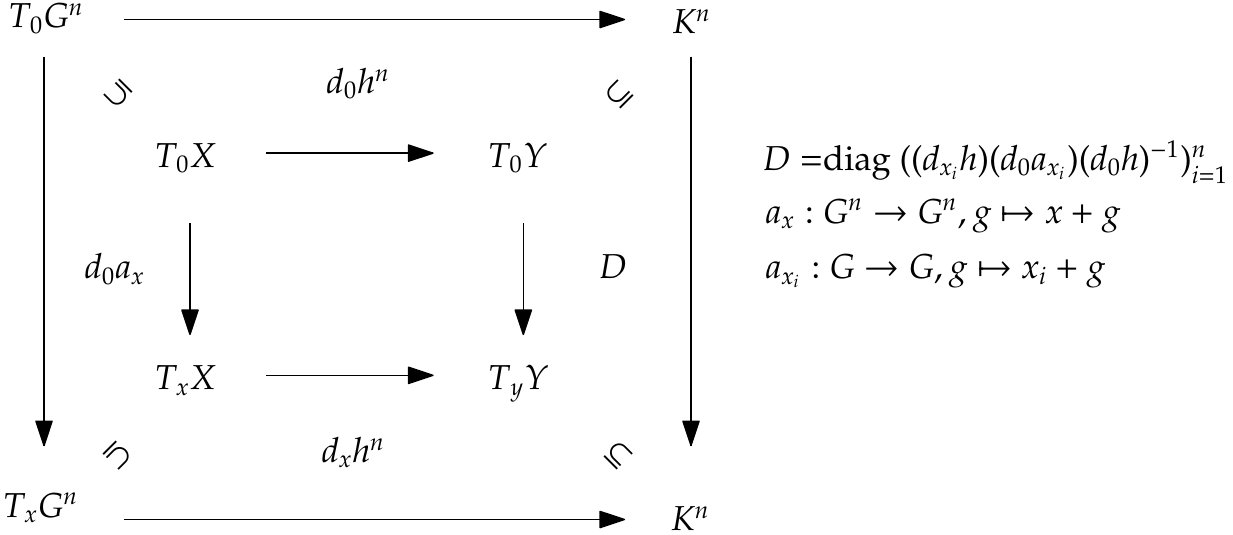}
\caption{The last commuting diagram in the proof of
Proposition~\ref{prop:FlockBDP}.} \label{fig:Diag}
\end{center}
\end{figure}

For the first statement, we will use the homogeneity of $X$.  A general
point $y$ in $Y$ has the same properties as $0$: $y$ is of the form
$h^n(x)$ for some $x=(x_1,\ldots,x_n) \in X$, $d_{x_i} h$ is nonzero for
each $i$, and $d_x h^n$ is an isomorphism between $T_x X$ and $T_y Y$. In
that case, we have the commuting diagram of
Figure~\ref{fig:Diag}, where all linear
maps are isomorphisms and the right-most vertical map has a diagonal
matrix relative to the standard basis. 
Consequently, the matroids represented by $T_{0}Y$ and $T_{y} Y$
are equal.
\end{proof}

\section{Equivalence of algebraic representations}
\label{sec:Equivalence}

In our examples below we will need to characterize the algebraic
representations of certain matroids, up to equivalence. Here we discuss
briefly what this means. For this we go back to the original definition
of algebraic matroids: let $x_1,\ldots,x_n$ be elements of an extension
field $L$ of our algebraically closed ground field $K$. In the matroid
$M$ on $[n]$ determined by this data, $I$ is independent if and only if
the $x_i,i \in I$ are algebraically independent over $K$. In the set-up
of the introduction, $L$ is the function field of $X \subseteq K^n$ and
the $x_i$ are the coordinate functions. Clearly, if we enlarge $L$, the
matroid remains the same, and if we add $x_{1}', \ldots, x_{n}' \in L$
such that the algebraic closure of $K(x_i)$ in $L$ equals that of
$K(x_{i}')$ for each $i$, then the corresponding elements $i$ and $i'$
are parallel.
Thus, the matroid of the restriction to $x_{1}', \ldots,
x_{n}'$ is again $M$, and we call this algebraic realization
\emph{equivalent} to the original one.

In this section we will show that the Lindstr\"om valuations of equivalent
algebraic representations differ only by a trivial valuation.  This
follows from a more general statement about valuations of matroids with
parallel elements.  We use the following lemma, which is a straightforward
consequence of submodularity of matroid valuations.

\begin{lm}
\label{lm:valchar}
Let $\nu: \binom{E}{r} \rightarrow \RR \cup \{\infty\}$ be a matroid
valuation.  Let $S \in \binom{E}{r-2}$ and $\{a,b,c,d\} \in \binom{E
\setminus S}{4}$ be given.  Then the minimum of
\begin{eqnarray*}
\nu(S \cup \{a,b\}) &+& \nu(S \cup \{c,d\}),\\
\nu(S \cup \{a,c\}) &+& \nu(S \cup \{b,d\}), \textrm{ and}\\
\nu(S \cup \{a,d\}) &+& \nu(S \cup \{b,c\})
\end{eqnarray*}
is attained at least twice. \hfill $\square$
\end{lm}

\begin{prop} \label{prop:Parallel}
Let $M$ be a matroid on $E$, and suppose $i,j \in E$ are parallel in $M$.
Let $\nu$ be a valuation of $M$.
Then there exists $c_{i,j} \in \RR$ such that
\[\nu(S \cup \{i\}) - \nu(S \cup \{j\}) = c_{i,j}\]
for all $S \subseteq E \setminus \{i,j\}$ for which $S \cup \{i\}$
(and hence also $S \cup \{j\}$) is a basis of $M$.
\end{prop}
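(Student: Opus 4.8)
The plan is to reduce the statement to a single application of Lemma~\ref{lm:valchar}. First I would fix two sets $S, S' \subseteq E \setminus \{i,j\}$ such that both $S \cup \{i\}$ and $S' \cup \{i\}$ are bases of $M$, and show that $\nu(S \cup \{i\}) - \nu(S \cup \{j\}) = \nu(S' \cup \{i\}) - \nu(S' \cup \{j\})$. Since the basis exchange graph of a matroid (on bases of a fixed rank containing a fixed element) is connected, it suffices to treat the case where $S$ and $S'$ differ by a single element, say $S = T \cup \{a\}$ and $S' = T \cup \{b\}$ with $a \neq b$ and $T \in \binom{E \setminus \{i,j,a,b\}}{r-2}$; the general case then follows by chaining these single-step comparisons along a path in the exchange graph, and the constant $c_{i,j}$ is the common value.

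For the single-step case, I would apply Lemma~\ref{lm:valchar} with the set $T$ in the role of $S$ and with the four-element set $\{i,j,a,b\}$. This gives that the minimum of the three quantities
\[
\nu(T \cup \{i,j\}) + \nu(T \cup \{a,b\}), \quad
\nu(T \cup \{i,a\}) + \nu(T \cup \{j,b\}), \quad
\nu(T \cup \{i,b\}) + \nu(T \cup \{j,a\})
\]
is attained at least twice. The key observation is that $i$ and $j$ are parallel, so $T \cup \{i,j\}$ is a dependent set, which forces $\nu(T \cup \{i,j\}) = \infty$ (a matroid valuation is finite exactly on the bases). Hence the first quantity is $+\infty$ and cannot be among the two smallest unless the other two are also infinite; in all cases the minimum being attained twice forces the second and third quantities to be equal and finite:
\[
\nu(T \cup \{i,a\}) + \nu(T \cup \{j,b\}) = \nu(T \cup \{i,b\}) + \nu(T \cup \{j,a\}).
\]
Rearranging, $\nu(T \cup \{a\} \cup \{i\}) - \nu(T \cup \{a\} \cup \{j\}) = \nu(T \cup \{b\} \cup \{i\}) - \nu(T \cup \{b\} \cup \{j\})$, which is exactly the single-step equality with $S = T \cup \{a\}$, $S' = T \cup \{b\}$.

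The main obstacle is bookkeeping rather than a conceptual difficulty: I must be careful that along the chosen path in the exchange graph every intermediate set $S_\ell \cup \{i\}$ is genuinely a basis (so that $\nu$ is finite there and the single-step lemma applies), and that the four-element sets $\{i,j,a,b\}$ appearing at each step are honestly four distinct elements disjoint from the relevant $T$. Connectivity of the relevant piece of the exchange graph is standard, but I would state it explicitly: given two bases $B, B'$ of $M$ both containing $i$, one can pass from $B$ to $B'$ by successive single-element symmetric exchanges, and since $i$ lies in both, one may arrange the exchanges to avoid ever removing $i$ (equivalently, work inside the matroid $M / i$, whose bases are in bijection with the rank-$r$ bases of $M$ containing $i$, and use connectivity of its basis exchange graph). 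Once that is in place, transitivity of the single-step equality along the path yields a well-defined constant $c_{i,j}$, completing the proof.
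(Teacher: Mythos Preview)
Your proof is correct and follows essentially the same approach as the paper: both reduce to a single application of Lemma~\ref{lm:valchar} with the four elements $\{i,j,a,b\}$, using that $\nu(T\cup\{i,j\})=\infty$ to force equality of the other two terms. The only cosmetic difference is that the paper packages the induction as a minimal-counterexample argument rather than an explicit path in the basis exchange graph.
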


\begin{proof}
Suppose $S,S'$ are such that
\[\nu(S \cup \{i\}) - \nu(S \cup \{j\}) \neq \nu(S' \cup \{i\}) - \nu(S' \cup \{j\}),\]
and $|S \setminus S'|$ is minimal.
Clearly $S \neq S'$.
Using the basis exchange axiom of matroids, let $a \in S \setminus S'$ and $b \in S' \setminus S$ be given such that $S \setminus \{a\} \cup \{b,i\}$ is a basis.
Then
\[\nu(S \setminus \{a\} \cup \{b,i\}) - \nu(S \setminus \{a\} \cup \{b,j\}) = \nu(S' \cup \{i\}) - \nu(S' \cup \{j\})\] by minimality of $|S \setminus S'|$.
Since $i$ and $j$ are parallel, $\nu(S \setminus \{a\} \cup \{i,j\}) = \infty$.
By Lemma \ref{lm:valchar}, we have
\[\nu(S \cup \{i\}) - \nu(S \cup \{j\}) = \nu(S \setminus \{a\} \cup
\{b,i\}) - \nu(S \setminus \{a\} \cup \{b,j\}),\] which is a contradiction.
\end{proof}

\begin{prop} \label{prop:Equivalence}
Let $L \supseteq K$ be a field extension and let
$x_1,\ldots,x_n,x_1',\ldots,x_n' \in L$ be elements. Suppose that, for
each $i$, the
algebraic closure of $K(x_i)$ in $L$ is the same as that of $K(x_i')$,
i.e., that the elements $(x_1,\ldots,x_n)$ and $(x_1',\ldots,x_n')$
determine equivalent algebraic representations of the same matroid $M$.
Let $\mu,\mu'$ be the Lindstr\"om valuations of these representations.
Then there exists an $\alpha \in \RR^n$ such that
\[ \mu'(B)=\mu(B) + e_B^T \alpha \text{ for all bases $B$ of $M$.} \]
\end{prop}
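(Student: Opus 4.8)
The plan is to reduce the statement to a statement about pairs of valuations on a single matroid $M$ whose elements come in parallel pairs, and then invoke Proposition~\ref{prop:Parallel}. Concretely, I would work with the $2n$-element ground set $[n] \sqcup [n]'$ and the matroid $\widehat M$ on this set determined by the $2n$ elements $x_1,\ldots,x_n,x_1',\ldots,x_n' \in L$; by hypothesis $i$ and $i'$ are parallel in $\widehat M$ for each $i$, and $\widehat M$ restricted to $[n]$ (resp.\ to $[n]'$) is $M$. The key point to establish first is that there is a Lindstr\"om valuation $\widehat\mu$ on $\widehat M$ whose restriction to bases contained in $[n]$ is $\mu$ and whose restriction to bases contained in $[n]'$ is $\mu'$. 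This follows from the construction of the Lindstr\"om valuation in \cite{Bollen17,Cartwright18}: the valuation of $\widehat M$ is built from the family $x_1,\ldots,x_n,x_1',\ldots,x_n'$ viewed all at once, and deleting any subset of the elements simply restricts the valuation; so deleting $[n]'$ recovers $(M,\mu)$ and deleting $[n]$ recovers $(M,\mu')$.

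With $\widehat\mu$ in hand, I apply Proposition~\ref{prop:Parallel} to each parallel pair $\{i,i'\}$ in $\widehat M$: this produces a constant $c_i := c_{i,i'} \in \RR$ such that $\widehat\mu(S \cup \{i\}) - \widehat\mu(S \cup \{i'\}) = c_i$ for every $S \subseteq [n]\sqcup[n]' \setminus \{i,i'\}$ with $S\cup\{i\}$ a basis. Set $\alpha := (c_1,\ldots,c_n) \in \RR^n$. Now take any basis $B$ of $M$, regarded as a subset of $[n]$; I want to show $\mu'(B) = \widehat\mu(B') = \widehat\mu(B) + e_B^T\alpha = \mu(B) + e_B^T\alpha$, where $B'$ denotes the copy of $B$ inside $[n]'$. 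The middle equality is obtained by swapping the elements of $B$ from their $[n]$-copies to their $[n]'$-copies one at a time: at each step we replace some $i$ by $i'$ while the other elements stay fixed, pick up exactly the additive contribution $c_i$ by the previous paragraph, and the intermediate sets $(B \setminus T) \cup T'$ remain bases of $\widehat M$ because $i$ and $i'$ are parallel (so swapping a coordinate of a basis for its parallel copy yields a basis). Summing the $|B| = r$ increments gives precisely $e_B^T \alpha$.

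The main obstacle I anticipate is the very first step: verifying that the Lindstr\"om valuation of the combined system on $2n$ elements restricts correctly to each half. This is essentially a compatibility-of-construction claim rather than a deep fact — it amounts to checking that the Lindstr\"om valuation commutes with deletion of elements, which is immediate from its definition via inseparable degrees / the flock construction in \S\ref{sec:Flocks} once one recalls that deleting an element corresponds to forgetting a coordinate. A secondary, purely bookkeeping subtlety is that Proposition~\ref{prop:Parallel} only constrains differences over bases $S\cup\{i\}$, so when telescoping I must keep each intermediate set a basis; this is guaranteed by parallelism as noted, but it is the one place where a careful choice of the order of swaps (or an appeal to the basis-exchange axiom, exactly as in the proof of Proposition~\ref{prop:Parallel}) is needed. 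Everything else is a short computation.
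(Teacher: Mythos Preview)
Your proposal is correct and follows essentially the same route as the paper: build the Lindstr\"om valuation $\widehat\mu$ on the $2n$-element matroid with parallel pairs, observe that it restricts to $\mu$ and $\mu'$ on the two halves (the paper cites \cite[Theorem~1]{Cartwright18} and multiplicativity of inseparable degree for this), then apply Proposition~\ref{prop:Parallel} and telescope. The only slip is a harmless sign: with your convention $c_i=\widehat\mu(S\cup\{i\})-\widehat\mu(S\cup\{i'\})$ the telescoping gives $\widehat\mu(B')=\widehat\mu(B)-e_B^T\alpha$, so take $\alpha_i=-c_i$ (or swap the roles of $i,i'$ in the definition of $c_i$).
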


\begin{proof}
The tuple $(x_1,\ldots,x_n,x_1',\ldots,x_n')$ is an algebraic
representation of the matroid obtained from $M$ by adding a parallel
copy $i'$ to each element $i$. Denote the Lindstr\"om valuation of this
algebraic representation by $\nu$. From \cite[Theorem 1]{Cartwright18}
and multiplicativity of inseparable degree \cite[Cor. V.6.4]{Lang02}, it
follows that $\nu$ restricts to $\mu$ on the copy of $[n]$ corresponding
to $x_1,\ldots,x_n$ and to $\mu'$ on the copy of $[n]$ corresponding
to $x_1',\ldots,x_n'$. Furthermore, by Proposition~\ref{prop:Parallel},
for any basis $\{i_1, \ldots, i_r\}$ of $M$, we have
\[\nu(\{i_1, \ldots, i_r\}) = \nu(\{i_1', \ldots, i_r'\}) + c_{i_1, i_1'} + \ldots + c_{i_r, i_r'}.\]
Hence $\mu$ and $\mu'$ differ by the trivial valuation $B \mapsto e_B^T
\alpha$ with $\alpha_i = c_{i,i'}$ for each $i$.
\end{proof}

\section{Examples} \label{sec:Examples}

\begin{ex}
Consider the matroid $M$ from Figure \ref{fig:ellipticexample}.
\begin{center}
\begin{figure}
\includegraphics{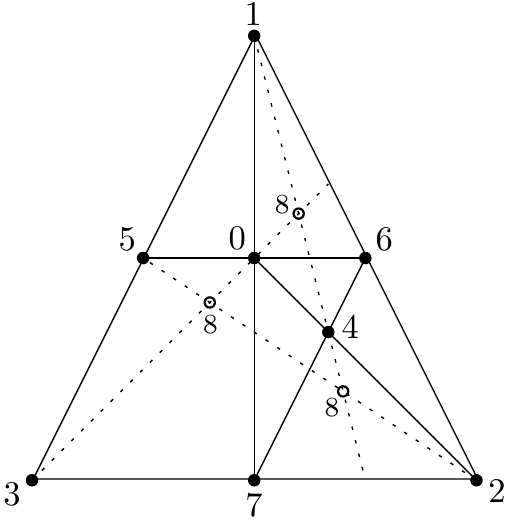}
\caption{A matroid of rank 3 on 9 elements. A triple is collinear if and only if it is dependent in the matroid. The point 8 is the common intersection of the lines through $\{0,3\}$, $\{1,4\}$ and $\{2,5\}$.}  \label{fig:ellipticexample}
\end{figure}
\end{center}

We construct a general matrix over a division ring $S$ such that each dependent set of $M$ is dependent in the matrix:
\[\Psi = \bordermatrix{ & & & \cr
   0& 1 & 0 & 0 \cr
   1& 0 & 1 & 0 \cr
   2& 0 & 0 & 1 \cr
   3& 1 & 1 & a \cr
   4& 1 & 0 & -1 \cr
   5& 1 & a & a \cr
   6& 0 & 1 & 1 \cr
   7& 1 & 1 & 0 \cr
   8& 1 & a & -1},
  \]
where $a \in K$ satisfies $a^2 = -1$.
After choosing a basis and fixing row and column scalars, the remaining entries were chosen as freely as possible given the dependent triples of $M$.
As it turns out, the only freedom that is left is the choice of $a$.

No assumptions on the characteristic or commutativity of $S$ have been
made at this point.  If $S$ has characteristic 2, then (among others)
the rows corresponding to the basis $\{3,4,5\}$ become dependent, so that
$\Psi$ cannot be a representation of $M$.  So $S$ must have characteristic
$\neq 2$. If, for example, $S = \QQ(i)$ and $a = i$, then a subset
of rows of $\Psi$ is dependent if and only if it is dependent in $M$.
Hence the column space of $\Psi$ is a representation of $M$ over $\QQ(i)$.

Now suppose that $K$ is a field of characteristic 2, and $G$ is a
connected one-dimensional algebraic group over $K$.  Let $X$ be a closed,
connected subgroup of $G^n$, representing $M$ algebraically.  Then by
Theorem \ref{thm:bijections}, there is a linear representation of $M$
over the endomorphism ring $\EE$ of $G$.  Due to the above, $M$ is not
representable over $\QQ$, nor over $K(F)$.  Hence $G$ cannot be either
the additive or multiplicative group.  So $G$ must be an elliptic curve.

And indeed, for the supersingular elliptic curve $G$
from Example~\ref{ex:curve}, $\EE$ is isomorphic to the Hurwitz
quaternions.  So by taking $a \in \EE$ the element corresponding to
$i$, we find that $M$ is a matroid over the one-dimensional group
$G$. To compute the Lindstr\"om valuation of this realization
of $M$, we invoke Theorem~\ref{thm:Flock} and Proposition
\ref{prop:Class}(\ref{item:quaternion}), which says that the valuation
$v$ on $\EE$ maps $\alpha$ to the $2$-adic valuation of $\alpha
\bar{\alpha}$. Then $B \mapsto v(\det \Psi_B)$ is the corresponding
Lindstr\"om valuation of $M$.

\hfill $\clubsuit$
\end{ex}

\begin{ex}
Let $M$ be the non-Fano matroid. As is well-known, $M$ is realizable
over a division ring if and only if the characteristic is not $2$, in
which case there is a projectively unique realization, given by the
rows of the matrix
\[
\begin{pmatrix}
1 & 0 & 0 \\
0 & 1 & 0 \\
0 & 0 & 1 \\
1 & 1 & 0 \\
1 & 0 & 1 \\
0 & 1 & 1 \\
1 & 1 & 1
\end{pmatrix}.
\]
Nonetheless, $M$ has algebraic realizations over a field of
characteristic 2 using either the group $\Gm$ or an elliptic curve, both
of whose endomorphism rings $\EE$ have characteristic $0$.

If $w$ denotes the Lindstr\"om valuation of one of these
algebraic realizations, then, using Theorem~\ref{thm:Flock} and
Proposition~\ref{prop:Determinant}, we can compute the following
invariant:
\begin{align*}
\gamma(w) &:=
w(\{4,5,6\}) - w(\{1,2,5\}) - w(\{1,3,6\}) - w(\{2,3,4\}) + 2
w(\{1,2,3\}) \\
&= v(-2) - v(1) - v(-1) - v(1) + 2v(1) = v(2),
\end{align*}
where $v$ is the valuation on $\EE$. For each element of $M$, the
sum of the coefficients of the valuations in which it appears in the
definition of $\gamma(w)$ is $0$, and so $\gamma$ is invariant under
adding trivial valuations. By the classification of the
valuations in Proposition~\ref{prop:Class}, $\gamma(w)$ is either $2$ if
the algebraic group $G$ is a supersingular elliptic curve or $1$
otherwise.

Moreover, we claim that by results in \cite{Evans91}, every algebraic
realization of $M$ is equivalent to an $\EE$-linear realization, and so
by Proposition~\ref{prop:Equivalence}, $\gamma(w)$ is either 1 or 2 for
the Lindstr\"om valuation~$w$ on any algebraic realization of~$M$. Indeed,
\cite[Thm.\ 2.1.2]{Evans91} states that any algebraic realization of the
matroid $M(K_4)$ of the complete graph on 4 vertices is equivalent to
an $\EE$-linear realization. The deletion of the 6th element of $M$ is
isomorphic to $M(K_4)$, and therefore in any algebraic realization of
$M$, the elements other than the 6th are equivalent to an $\EE$-linear
realization. More specifically, if $L$ is an algebraically closed field
extension of
$K$ and $x_1,\ldots,x_7 \in L$ have the algebraic dependencies over $K$
specified by $M$, then there exists a connected, one-dimensional algebraic
group $G$, a connected subgroup $X \subseteq G^7$, and an embedding $K(X)
\to L$ that, for each $i \in \{1,\ldots,5,7\}$, maps the $i$-th coordinate
function to an element $x_i' \in L$ such that the algebraic closures
$\overline{K(x_i)}$ and $\overline{K(x_i')}$ in $L$ coincide. Now let
$x_6' \in L$ be the image of the 6th coordinate. The fact that the
rank-two flats in $M$ spanned by $1,7$ and by $2,3$ intersect precisely in $7$
implies that
\[ \overline{K(x_6')}
=\overline{K(x_1',x_7')} \cap \overline{K(x_2',x_3')}=
\overline{K(x_1,x_7)} \cap \overline{K(x_2,x_3)}=\overline{K(x_6)}; 
\] 
this proves the claim. 

In conclusion, scaling the valuation on $M$ coming from the
$\Gm$-realization by a positive integer yields infinitely many
valuations, of which exactly two are realizable as the Lindstr\"om
valuation of an algebraic realization over a field of characteristic 2.
In contrast, all of these scaled valuations are realizable by Frobenius
flocks over $\mathbb F_2$ by scaling the original Frobenius flock.
\hfill $\clubsuit$
\end{ex}

We can now prove the last of the results from the introduction. We refer
to \cite{Dress92} for contractions, restrictions, and duality of matroid
valuations. 

\begin{proof}[Proof of Theorem~\ref{thm:NonDual}]
We use the universality construction in \cite[Lem.~3.4.1]{Evans91} to
construct the dual matroid $M^*$, from the following system of equations
in $p^3$ non-commuting variables $u, y_0, \ldots, y_{p^3-2}$:
\begin{align}
y_1 &= u y_0 \notag \\
y_2 &= u y_1 \notag \\
&\vdots \label{eq:NonDual} \\
y_{p^3-2} &= uy_{p^3-3} \notag \\
y_0 &= u y_{p^3-2} \notag \\
y_p &= y_0 u \notag
\end{align}
Then, by \cite[Lem.\ 3.4.1]{Evans91}, there exists a matroid $M^*$ such
that any algebraic realization of $M^*$ is equivalent to a realization by
a closed, connected subgroup $Y \subset G^{n}$ for some one-dimensional
algebraic group $G$. Moreover, setting $\EE:=\End(G)$ and letting $Q$
be the division ring generated by $\EE$, that subgroup realization
corresponds to an assignment of distinct values from $Q$ to the variables
$u, y_0, \ldots, y_{p^3-2}$ such that the equations (\ref{eq:NonDual}) are
satisfied; and any such choice yields a subgroup realization. We now
assume that we have such a realization and study the solutions in an
endomorphism ring $\EE$ of some one-dimensional group.

In particular, we have $y_{i} = u^iy_0$ for all $i$, and, by the second
to last equation, $y_0 =
u y_{p^3-2} = u^{p^3-1} y_0$, 
 which means that $u$ is a $(p^3-1)$-root
of unity, and since the $y_i$ are distinct, $u$ is a primitive
$(p^3-1)$-root of unity. Over $\QQ$, primitive $(p^3-1)$-roots of
unity have degree at least $6$, but all elements in the endomorphism
rings of $\Gm$ and of elliptic curves have degree at most $2$ over
$\QQ$. Therefore, any $\EE$-linear realization of $M^*$ comes from $G =
\Ga$ and $\EE = K[F]$, and $u$ is an element of $\FF_{p^3} \setminus
\FF_p$. In addition, the last equation of~(\ref{eq:NonDual}) gives $y_0
u = y_p = u^p y_0$.

Now, consider the
semidirect product $K^* \rtimes \ZZ$, where the generator of $\ZZ$ acts
on $K^*$ by the
Frobenius automorphism, and let $\mu$ denote
the homomorphism from the monoid $(K[F] \setminus \{0\}, \cdot)$ to
$K^* \rtimes \ZZ$ defined by
\begin{equation*}
\mu\Bigg(\sum_{i=0}^d a_i F^i\Bigg) = (a_d, d),
\end{equation*}
where $d$ is chosen so that $a_d$ is non-zero. Note that the valuation
on $K[F]$ is the projection of $\mu$ onto the second coordinate. Since
$K^* \rtimes \ZZ$ is a group, $\mu$ extends uniquely to a group
homomorphism $\mu\colon Q^* \rightarrow K^* \rtimes \ZZ$, where $Q$ is
the division ring generated by $\EE = K[F]$. Applying $\mu$ to the equation
$y_0 u = u^p y_0$, and using $(a,d)$ to denote
$\mu(y_0)$, we have:
\begin{align*}
(a, d)\cdot (u, 0) &= (u^p, 0) \cdot (a, d) \\
(a u^{p^d}, d) &= (u^p a, d)
\end{align*}
Since $K^*$ is commutative, $u^{p^d} = u^p$. Also, $u$ is in $\FF_{p^3}
\setminus \FF_p$, so the action of Frobenius on $u$ has order $3$, and
so this means that $d = v(y_0) \equiv 1 \bmod 3$.

The construction of $M^*$ in \cite[Lem. 3.4.1]{Evans91} represents the
variable $y_0$ as a cross-ratio, meaning that there are 4 elements
in $M^*$ (denoted $x_0$, $x_\infty$, $x_1$, and $y_0$ in the proof there
but denoted $1,2,3,4$ here)
such that any $Q$-realization of $M^*$ is equivalent to one whose
restriction to these 4 elements is:
\begin{equation*}
\begin{pmatrix}
1 & 0 \\
0 & 1 \\
1 & 1 \\
1 & y_0
\end{pmatrix},
\end{equation*}
whose matroid is the uniform matroid $U_{2,4}$.
By Proposition~\ref{prop:Determinant}, the restriction of the Lindstr\"om
valuation to this $U_{2,4}$, denoted $w^*$, satisfies:
\begin{equation}\label{eq:ValU24}
\begin{split}
w^*(\{1,4\}) + w^*(\{2,3\}) - w^*(\{1, 3\}) -
w^*(\{2,4\}) = \\
v(y_0) + v(-1) - v(1) - v(-1) = v(y_0),
\end{split}
\end{equation}
as does any valuation that differs from $w^*$ by a trivial valuation. 

We now construct a realization of the matroid $M$ dual to $M^*$. First,
we can construct a $K[F]$-realization of $M^*$ by choosing $u$ to be
an element of $\FF_{p^3} \setminus \FF_p$, $y_i = u^{i} F$ for $i = 1,
\ldots, p^3-2$.
Then, by Theorem~\ref{thm:duality}, $M$ also
has a $K[F]$-realization, which is constructed by taking the orthogonal
complement of the realization of $M^*$---this orthogonal complement is
a left $Q$-vector space---and applying the anti-isomorphism $\tau$ which sends $F$ to
$F^{-1}$. In particular, the contraction of all of $M$ except the
elements $1,\ldots,4$ from the previous paragraph
has the realization:
\begin{equation*}
\begin{pmatrix}
-1 & -1 \\
-1 & -F^{-1} \\
1 & 0 \\
0 & 1
\end{pmatrix}.
\end{equation*}
Again, using Proposition~\ref{prop:Determinant} to compute the valuation
$w$ on this realization, we have:
\begin{equation*}
w(\{1,4\}) + w(\{2,3\})
- w(\{1,3\}) - w(\{2,4\})
= v(-1) + v(F^{-1}) - v(1) - v(-1) = -1
\end{equation*}
Therefore, if $w^*$ is the dual valuation of $w$, defined by $w^*(B) =
w(\{1,2,3,4\} \setminus B)$, then it satisfies:
\begin{equation*}
w^*(\{2,3\}) + w^*(\{1,4\})
- w^*(\{2,4\}) - w^*(\{1,3\}) = -1.
\end{equation*}
This valuation is not the Lindstr\"om valuation of an algebraic
realization by \eqref{eq:ValU24} and the requirement that $v(y_0) \equiv 1
\bmod 3$.
\end{proof}

\begin{ex}
We take $G = \Ga$ over any field of positive characteristic $p$, and let
$\lambda$ be any element of $K \setminus \FF_p$. Then, $N$ will be given by the matrix
\begin{equation*}
\bordermatrix{ & x & y & z & u \cr
1  & 1 & 1 & 1 & 1 \cr
2  & 1 & 0 & 1 & 0 \cr
3  & 0 & 1 & 0 & 1 \cr
4  & F & 1 & F & 1 \cr
5  & 1 & 1 & 0 & 0 \cr
6  & 1 & 0 & 0 & 0 \cr
7  & 0 & 1 & 0 & 0 \cr
8  & F & 1 & 0 & 0 \cr
9  & 0 & 0 & 1 & 1 \cr
10 & 0 & 0 & 1 & 0 \cr
11 & 0 & 0 & 0 & 1 \cr
12 & 0 & 0 & F & 1 \cr
13 & \lambda & \lambda & 1 & 1 \cr
14 & \lambda & 0 & 1 & 0 \cr
15 & 0 & \lambda & 0 & 1 \cr
16 & \lambda F & \lambda & F & 1 \cr
17 & F \lambda & \lambda & F & 1
}
\end{equation*}
\begin{figure}
\includegraphics{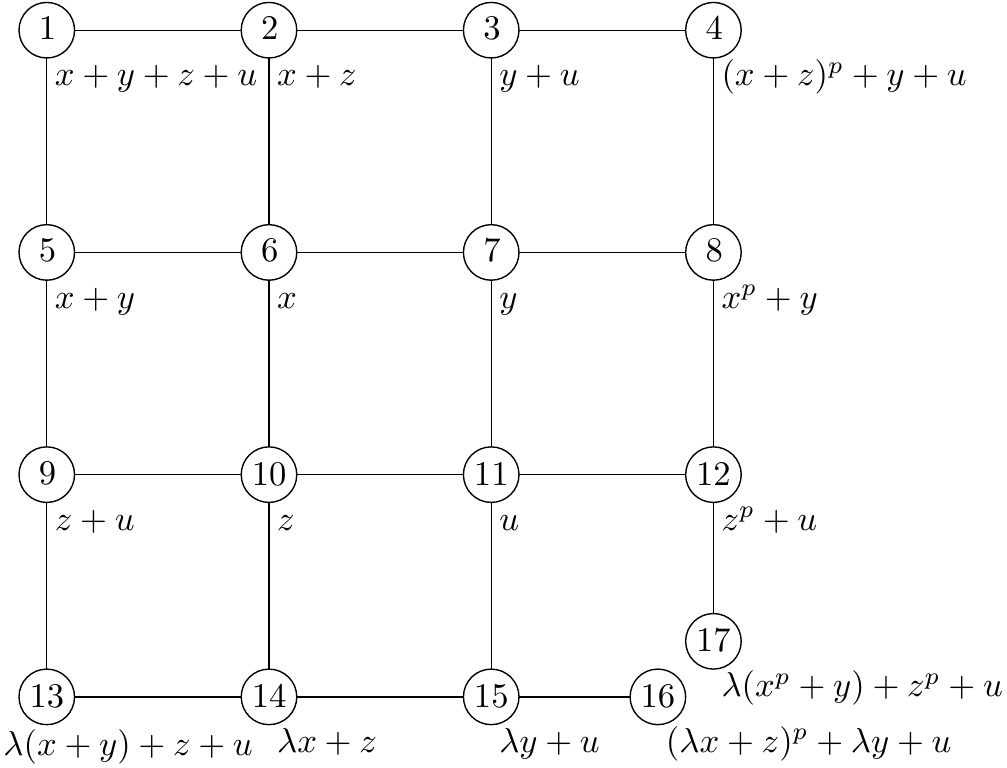}
\caption{An algebraic matroid in $K(x,y,z,u)$ \cite{Lindstrom86a}.}\label{fig:non-comm-example}
\end{figure}%
The first fifteen rows of this matrix can be obtained by interpreting
the field elements in \cite[Fig.~1]{Lindstrom86a} as group homomorphisms
from $\Ga^4$ to $\Ga$. In addition, $N$ arises as a sort of
Kronecker product of the matrices
\begin{equation*}
N_1 = \begin{pmatrix}
1 & 1 \\
1 & 0 \\
0 & 1 \\
\lambda & 1 
\end{pmatrix}
\qquad\mbox{and}\qquad
N_2 = \begin{pmatrix}
1 & 1 \\ 
1 & 0 \\
0 & 1 \\
F & 1
\end{pmatrix},
\end{equation*}
with the bottom row duplicated to have both possible ways of multiplying
the bottom left elements $\lambda$ and $F$ of the matrices.

Then the matroid~$M$ for $N$ is depicted in
Figure~\ref{fig:non-comm-example}, where the lines denote the rank 2
flats, each of which arise from fixing one row in either $N_1$ or $N_2$.
The matroid $M$ is not linear over any field, but is linear over any
non-commutative division ring. Indeed, the matrix $N$ defines the same
matroid if $\lambda$ and $F$ are any pair of non-commuting elements in
any
division ring, which they are in the endomorphism ring of~$\Ga$, since we
required that $\lambda^p \neq \lambda$.
This implies, for instance, that
the submatrix in rows $8,12,14,15$ has full rank, which it would not have
if $\lambda$ and $F$ commuted. Since $M$ is linear over the endomorphism
ring of $\Ga$ in any positive characteristic, it is algebraic over any
field of positive characteristic. Many restrictions of $M$ are also not
linear over any field, such as the restriction to $\{1, 4, 7, 8, 10, 11,
13, 14\}$, which appears in \cite{Ingleton71} and \cite{Lindstrom86a}.
The restriction to $\{3, 4, 7, 8, 9, 10, 13, 14\}$ appears in
\cite[Fig.~3]{Lindstrom85}, where it is incorrectly said not to be
realizable over a division ring, but an algebraic realization over $\FF_2$
is nonetheless given. \hfill $\clubsuit$
\end{ex}

\bibliographystyle{alpha}
\bibliography{diffeq,draismajournal}

\end{document}